\definecolor{linkblue}{HTML}{003d73}
\definecolor{linkgreen}{HTML}{006161}
\definecolor{linkred}{HTML}{a11950}
\crefname{thm}{theorem}{theorems}
\crefname{prop}{proposition}{propositions}
\crefname{cor}{corollary}{corollaries}
\newtheorem{thm}{Theorem}[section]
\newtheorem*{thm*}{Theorem}
\newtheorem{prop}[thm]{Proposition}
\newtheorem{lem}[thm]{Lemma}
\newtheorem{cor}[thm]{Corollary}
\newtheorem*{existence}{Theorem~\ref*{thm:existence}}
\newtheorem*{connectedness}{Theorem~\ref*{thm:connected}}
\theoremstyle{definition}
\newtheorem{definition}[thm]{Definition}
\newtheorem{example}[thm]{Example}
\newcommand{\R}{\mathbb{R}}
\newcommand{\quat}{\mathbb{H}}
\renewcommand{\C}{\mathbb{C}}
\newcommand{\F}{\mathcal{F}}
\newcommand{\frames}{\F^{\quat^d,N}}
\newcommand{\hframes}{\F^{\mathfrak{H},N}}
\newcommand{\Id}{\mathbb{I}}
\newcommand{\tr}{\operatorname{tr}}
\newcommand{\hermitian}{\mathcal{H}}
\renewcommand{\Re}{\operatorname{Re}}
\newcommand{\I}{\mathbf{i}}
\newcommand{\J}{\mathbf{j}}
\newcommand{\K}{\mathbf{k}}
\newcommand{\diag}{\operatorname{diag}}
\newcommand{\conv}{\operatorname{conv}}
\newcommand{\Sp}{\operatorname{Sp}}
\newcommand{\SU}{\operatorname{SU}}
\newcommand{\br}{\boldsymbol{r}}
\newcommand{\blam}{\boldsymbol{\lambda}}
\newcommand{\subjclass}[2][2010]{%
  \let\@oldtitle\@title%
  \gdef\@title{\@oldtitle\footnotetext{#1 \emph{Mathematics subject classification.} #2}}%
}
\newcommand{\keywords}[1]{%
  \let\@@oldtitle\@title%
  \gdef\@title{\@@oldtitle\footnotetext{\emph{Key words and phrases.} #1.}}%
}
\title{Admissibility and Frame Homotopy for Quaternionic Frames}
\author[$\ast$]{Tom Needham}
\author[$\dag$]{Clayton Shonkwiler}
\affil[$\ast$]{Department of Mathematics, Florida State University, Tallahassee, FL} 
\affil[$\dag$]{Department of Mathematics, Colorado State University, Fort Collins, CO}
\date{}
\keywords{Frame theory, quaternions, isoparametric submanifolds, isotropy representation}
\subjclass{15A29, 42C15, 53C30, 53C40}
\begin{document}

\maketitle

\begin{abstract}
	We consider the following questions: when do there exist quaternionic frames with given frame spectrum and given frame vector norms? When such frames exist, is it always possible to interpolate between any two while fixing their spectra and norms? In other words, the first question is the admissibility question for quaternionic frames and the second is a generalization of the frame homotopy conjecture. We give complete answers to both questions. For the first question, the existence criterion is exactly the same as in the real and complex cases. For the second, the non-empty spaces of quaternionic frames with specified frame spectrum and frame vector norms are always path-connected, just as in the complex case. Our strategy for proving these results is based on interpreting equivalence classes of frames with given frame spectrum as adjoint orbits, which is an approach that is also well-suited to the study of real and complex frames.
\end{abstract}

\section{Introduction}\label{sec:intro}

In a finite-dimensional real or complex Hilbert space $\mathfrak{H}$, an (ordered) \emph{frame} is simply a collection $f_1, \dots , f_N \in \mathfrak{H}$ that spans $\mathfrak{H}$. When $N=d:=\dim \mathfrak{H}$, this is just a basis for $\mathfrak{H}$, but when $N > d$ a frame gives a redundant representation of a signal $v \in \mathfrak{H}$ by
\begin{equation}\label{eq:analysis}
	(\langle v, f_1 \rangle, \dots , \langle v, f_N \rangle)
\end{equation}
which can be more robust to erasures and other corruption of the data than a basis representation~\cite{Casazza:2003vp,Goyal:2001cd,Holmes:2004iv}.

Identifying the frame with the matrix $F = [f_1 | \dots | f_N]$ whose columns are the frame vectors, the redundant representation~\eqref{eq:analysis} corresponds to evaluation of the \emph{analysis operator} $v \mapsto F^\ast v$. Composition of the analysis operator with its adjoint \emph{synthesis operator} $w \mapsto F w$ gives the \emph{frame operator}
\[
	v \mapsto FF^\ast v.
\]
For orthonormal bases---or more generally, and by definition, \emph{Parseval frames}---the frame operator is the identity and the synthesis operator provides a simple method for reconstructing the signal $v$ from the data $F^\ast v$.

As in the case of Parseval frames, it is often desirable to choose frames with a fixed spectrum of the frame operator, for example to provide optimal reconstruction in a given noise model~\cite{Goyal:2001cd,Casazza:2011ev,Viswanath:2002bv}. Likewise, the squared norms $\|f_1\|^2,\dots,\|f_N\|^2$ of the frame vectors are often fixed for both practical and theoretical reasons~\cite{Kovacevic:2007fja,Viswanath:1999hf,Rupf:1994fl,Casazza:2006cx}.

In other words, given vectors $\blam=(\lambda_1, \dots , \lambda_d)$ and $\br=(r_1, \dots , r_N)$ of positive numbers, we are often interested in selecting frames from the space $\hframes_{\blam}(\br)$ of frames $f_1, \dots, f_N \in \mathfrak{H}$ so that $\blam$ is the spectrum of the frame operator $FF^\ast$ and $\|f_i\|^2 = r_i$ for $i=1, \dots , N$. In particular, two natural questions immediately present themselves:
\begin{enumerate}
	\item Does there exist a frame for $\mathfrak{H}$ with prescribed data $\blam$ and $\br$? In other words, is $\hframes_{\blam}(\br)$ non-empty?
	\item Can we interpolate between arbitrary elements of $\hframes_{\blam}(\br)$? In other words, is $\hframes_{\blam}(\br)$ path-connected?
\end{enumerate}

Question 1 has been completely answered by Casazza and Leon~\cite{Casazza:2010ti}, who give a simple compatibility criterion for $\blam$ and $\br$ which determines whether or not $\hframes_{\blam}(\br)$ is empty. Moreover, the same criterion applies in both the real and the complex cases.

Question 2 is a generalization of the well-known \emph{frame homotopy conjecture}, which was posed by Larson in a 2002 REU and first appeared in the literature in Dykema and Strawn's 2006 paper~\cite{Dykema:2006ux}. This conjecture says that when $\blam$ and $\br$ are constant vectors, the space $\hframes_{\blam}(\br)$ is path-connected (when considered with the natural subspace topology). Up to scale one can assume that the constant vector $\br = (1, \dots , 1)$, and hence the frames under consideration are \emph{unit-norm tight frames}, so the frame homotopy conjecture says that the space of unit-norm tight frames in $\mathfrak{H}$ is path-connected.

The frame homotopy conjecture was proved for both real and complex frames by Cahill, Mixon, and Strawn in a 2017 paper~\cite{Cahill:2017gv}. In previous work~\cite{NeedhamSGC}, we gave a complete answer to Question 2 for complex frames, showing that the space $\F^{\C^d,N}_{\blam}(\br)$ is always path-connected. For real frames with nonconstant $\blam$ or $\br$, Question 2 remains open.

There has been a recent flourishing of interest in \emph{quaternionic frames}~\cite{iverson_note_2021,Waldron:2020tp,Cohn:2016bz,EtTaoui:2020kf,Waldron:2020ti,Khokulan:2017ic,Sharma:2019js,Virender:2020ua}; that is, frames in $\quat^d$ for $d \geq 1$ and $\quat$ being the 4-dimensional skew field of quaternions. Since the group $\Sp(1)$ of unit quaternions is isomorphic to $\operatorname{Spin}(3)$ (the universal cover of the rotation group $\operatorname{SO}(3)$), the quaternions can be interpreted as a cone over $\operatorname{Spin}(3)$, so they are well-suited to parameterizing rotations in $\R^3$~\cite{Hanson:2006tr}. Just as complex numbers consist of a magnitude and a phase, quaternions consist of a magnitude and a versor, which determines a particular 3D rotation. Consequently, vectors $v \in \quat^d$ can be used to record both magnitude and orientation information, for example in framed curves like polymers or inflatable elastic rods~\cite{Cantarella:2013bla,Hanson:2012vb,Howard:2011fj,Needham:2017vd}. Moreover, quaternions have become an increasingly popular tool for representing signals in data-driven applications. For example, quaternion-valued signals have recently been used to encode RGB images \cite{ell2006hypercomplex,fletcher2017development}, measurements in industrial machinery \cite{yi2017quaternion}, and multicomponent seismic measurements \cite{zhao2020quaternion}; see also the special issue of \emph{Signal Processing} on Hypercomplex Signal Processing~\cite{SPspecialissue}. This motivates the extension of signal processing techniques for classical (i.e., real- or complex-valued) signals to quaternionic signals.

Despite the difficulties introduced by the non-commutativity of the quaternions, it is still possible to define frames in $\quat^d$, the norms of the frame vectors, and the spectrum of the frame operator (see \Cref{sub:quaternionic_frames}). In other words, the space $\frames_{\blam}(\br)$ of length-$N$ frames in $\quat^d$ with fixed frame spectrum $\blam$ and fixed frame vector norms $\|f_i\|^2 = r_i$ exists, and the goal of this paper is to give complete answers to Questions 1 and 2 in this case. Specifically, we show:

\begin{thm}\label{thm:existence}
	Let $N$, $d$, $\blam$, and $\br$ be as above, and additionally assume that $\blam$ and $\br$ are sorted in non-increasing order: $\lambda_1 \geq \dots \geq \lambda_d > 0$ and $r_1 \geq \dots \geq r_N > 0$. Then $\frames_{\blam}(\br)$ is non-empty if and only if
	\[
		\sum_{i=1}^k r_i \leq \sum_{i=1}^k \lambda_i \text{ for all } k=1, \dots , d \qquad \text{and} \qquad \sum_{i=1}^N r_i  = \sum_{i=1}^d \lambda_i.
	\]
\end{thm}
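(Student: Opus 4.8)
The plan is to pass from frames to their Gram matrices, reinterpret the norm constraint as a ``diagonal'' linear projection of an adjoint orbit, and identify the image of that projection via a Schur--Horn-type convexity theorem, after which the stated inequalities fall out as the majorization condition for membership in the resulting permutohedron. To set this up: to a frame $F = [f_1\mid\dots\mid f_N] \in \quat^{d\times N}$ I would associate its Gram matrix $G = F^\ast F \in \quat^{N\times N}$, a positive semidefinite quaternionic Hermitian matrix of rank $d$ with $G_{ii} = \|f_i\|^2$ and with the same nonzero eigenvalues as the frame operator $FF^\ast$. By the spectral theorem for quaternionic Hermitian matrices, the positive semidefinite rank-$d$ matrices with nonzero spectrum $\blam$ constitute precisely the adjoint orbit
\[
	\mathcal{O} = \{\, U\,\diag(\lambda_1,\dots,\lambda_d,0,\dots,0)\,U^\ast \;:\; U \in \Sp(N)\,\},
\]
and conversely every $G \in \mathcal{O}$ factors as $F^\ast F$ for some $F \in \quat^{d\times N}$ of rank $d$, which is then a frame in $\frames_{\blam}$. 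Hence $\frames_{\blam}(\br)$ is nonempty if and only if some matrix in $\mathcal{O}$ has diagonal $\br$; equivalently, if and only if $\br$ lies in the image $\Delta(\mathcal{O})$ of $\mathcal{O}$ under the projection $\Delta\colon G \mapsto (G_{11},\dots,G_{NN}) \in \R^N$.

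The heart of the argument is then to show that $\Delta(\mathcal{O})$ is the permutohedron $\conv(S_N\cdot\boldsymbol{\mu})$, with $\boldsymbol{\mu} = (\lambda_1,\dots,\lambda_d,0,\dots,0)$. One containment is elementary and already yields the necessity direction of the theorem: writing $G = U\diag(\boldsymbol\mu)U^\ast$ and using that the $\lambda_i$ are real---hence central in $\quat$---one gets $G_{jj} = \sum_i |U_{ji}|^2\mu_i$ with $\big(|U_{ji}|^2\big)_{i,j}$ doubly stochastic (the rows and columns of $U\in\Sp(N)$ are unit vectors), so $\Delta(G)$ is majorized by $\boldsymbol\mu$ by Birkhoff's theorem and the Hardy--Littlewood--P\'olya characterization of majorization. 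The reverse containment---realizing every point of the permutohedron, which gives sufficiency---is the main obstacle. I see two routes: invoke the nonlinear (Kostant--Schur--Horn) convexity theorem for the Cartan decomposition of the relevant noncompact symmetric space whose isotropy group is $\Sp(N)$, whose restricted root system is of type $A_{N-1}$ so that the Weyl group acting on the diagonal torus is $S_N$; or else bypass this machinery by recalling that Horn's classical inductive construction produces a \emph{real} symmetric matrix with prescribed spectrum and any majorized diagonal using only planar rotations, and that such a matrix is in particular quaternionic Hermitian and so lies in $\mathcal{O}$. The second route has the bonus of exhibiting transparently why the quaternionic existence criterion coincides with the real (and complex) one of Casazza--Leon~\cite{Casazza:2010ti}: the realizing frame may even be taken real.

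Finally, I would unwind the convexity statement into the stated inequalities. By Rado's theorem, $\br \in \conv(S_N\cdot\boldsymbol\mu)$ exactly when $\br$ is majorized by $\boldsymbol\mu$; as both vectors are sorted non-increasingly this means $\sum_{i=1}^k r_i \le \sum_{i=1}^k \mu_i$ for $1 \le k \le N$, with equality at $k = N$. The case $k=N$ is $\sum_{i=1}^N r_i = \sum_{i=1}^d \lambda_i$; for $k \le d$ the inequality reads $\sum_{i=1}^k r_i \le \sum_{i=1}^k \lambda_i$; and for $d < k < N$ it reads $\sum_{i=1}^k r_i \le \sum_{i=1}^d \lambda_i$, which is automatic from the $k=N$ equality and $r_i > 0$. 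So the majorization is equivalent to precisely the conditions in the statement. The only obstacle of substance is the surjectivity half of the convexity step---an honest quaternionic Schur--Horn theorem---the remaining work being bookkeeping; the one point requiring care is the non-commutativity, namely fixing the right notion of ``spectrum'' for quaternionic Hermitian matrices and observing that, because the $\lambda_i$ are real and hence central, the identity $G_{jj} = \sum_i |U_{ji}|^2\mu_i$ and the rank-$d$ factorization $G = F^\ast F$ behave exactly as over $\R$ or $\C$.
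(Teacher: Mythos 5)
Your proposal is correct, and its skeleton coincides with the paper's: pass to Gram matrices, identify the rank-$d$ positive semidefinite quaternionic Hermitian matrices with nonzero spectrum $\blam$ with the $\Sp(N)$-orbit of $\diag(\widetilde{\blam})$, reduce nonemptiness of $\frames_{\blam}(\br)$ to $\br$ lying in the image of the diagonal map $\Delta$ on that orbit, and then translate membership in $\conv(S_N\cdot\widetilde{\blam})$ into the sorted partial-sum conditions (your observation that the inequalities for $d<k<N$ are automatic from positivity and the trace equality matches the paper's reduction). Where you diverge is in how you establish $\Delta(\mathcal{O})=\conv(S_N\cdot\widetilde{\blam})$: the paper proves a quaternionic Schur--Horn theorem by applying Kostant's convexity theorem to the Cartan decomposition $\mathfrak{su}(2N)=\mathfrak{k}\oplus\mathfrak{p}$ with $\mathfrak{k}\simeq\mathfrak{sp}(N)$, together with a re-centering lemma to handle the nonzero trace; your first suggested route is exactly this, but your second route is genuinely different and fully valid: the containment $\Delta(\mathcal{O})\subseteq\conv(S_N\cdot\widetilde{\blam})$ follows from the doubly stochastic matrix $\bigl(|U_{ji}|^2\bigr)$ attached to $U\in\Sp(N)$ (real eigenvalues are central, and $U^\ast U=\Id_N$ forces $UU^\ast=\Id_N$, so rows and columns are unit vectors) plus Birkhoff, while the reverse containment follows from Horn's classical construction of a \emph{real} symmetric matrix with prescribed spectrum and majorized diagonal, which is diagonalized by an element of $\operatorname{O}(N)\subset\Sp(N)$ and hence already lies in $\mathcal{O}$. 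This elementary route avoids the Cartan/Kostant machinery and the re-centering step entirely, proves the quaternionic Schur--Horn statement as a byproduct, and makes transparent why the admissibility criterion agrees with the real and complex ones of Casazza--Leon (the realizing frame can be taken real). What the paper's route buys in exchange is the structural setup---adjoint orbits as (parallels of) isoparametric submanifolds with the projection $P$ playing the role of a momentum map---which is then reused essentially verbatim to prove the connectedness theorem via Mare's result; your shortcut proves existence but would not by itself feed into that second argument.
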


\begin{thm}\label{thm:connected}
	For any $\br$ and $\blam$, the space $\frames_{\blam}(\br)$ is path-connected.
\end{thm}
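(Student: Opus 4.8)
The plan is to realize $\frames_{\blam}(\br)$ as the total space of a principal bundle whose base is a fiber of a moment map on a compact coadjoint orbit, and then deduce path-connectedness from the connectedness half of the Atiyah--Guillemin--Sternberg theorem together with the connectedness of the structure group $\Sp(d)$. This is the quaternionic analogue of the argument used for complex frames in~\cite{NeedhamSGC}, so the real content is setting up the dictionary between quaternionic frame data and adjoint orbits of $\Sp(N)$.

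Concretely, I would first pass from a frame $F = [f_1 \mid \cdots \mid f_N]$---a $d \times N$ quaternionic matrix which, since every $\lambda_i > 0$, has full rank $d$---to its $N \times N$ quaternionic Hermitian Gram matrix $G = F^\ast F$. The nonzero spectrum of $G$ equals that of the frame operator $FF^\ast$, so $G$ has spectrum $(\lambda_1, \dots, \lambda_d, 0, \dots, 0)$, while its diagonal entries are $\|f_i\|^2 = r_i$. Thus $F \mapsto G$ carries $\frames_{\blam}(\br)$ into $\mathcal{O}_{\blam} \cap \mu^{-1}(\br)$, where $\mathcal{O}_{\blam}$ is the adjoint orbit of $\Sp(N)$ through $\diag(\lambda_1, \dots, \lambda_d, 0, \dots, 0)$, sitting inside the space of $N \times N$ quaternionic Hermitian matrices, and $\mu$ is the real ``extract the diagonal'' map. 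Two facts then need to be checked: (i) the Gram map is onto $\mathcal{O}_{\blam} \cap \mu^{-1}(\br)$, by taking a Hermitian square root of a given $G$ and truncating; and (ii) $F^\ast F = (F')^\ast F'$ if and only if $F' = UF$ for a unique $U \in \Sp(d)$, and this left $\Sp(d)$-action on frames is free (a frame spans $\quat^d$) and proper ($\Sp(d)$ is compact). Together these exhibit the Gram map as a principal $\Sp(d)$-bundle over $\mathcal{O}_{\blam} \cap \mu^{-1}(\br)$.

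The second half of the argument is symplectic. The coadjoint orbit $\mathcal{O}_{\blam}$ is a compact connected symplectic manifold, the maximal torus $T^N \subset \Sp(N)$ acts on it Hamiltonianly, and---the point I would be most careful about---the diagonal map $\mu$ restricted to $\mathcal{O}_{\blam}$ is a moment map for this action. Granting this, Atiyah's connectedness theorem says every fiber of $\mu|_{\mathcal{O}_{\blam}}$ is connected; in particular $\mathcal{O}_{\blam} \cap \mu^{-1}(\br)$ is connected, and it is nonempty exactly when $\br$ lies in the moment polytope, which is precisely the majorization condition of \Cref{thm:existence}. Since the total space of a principal bundle with connected base and connected structure group is connected, $\frames_{\blam}(\br)$ is connected; being a semialgebraic subset of $\quat^{d \times N}$ it is locally path-connected, hence path-connected.

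The main obstacle is the dictionary invoked above: one must verify that the space of $N \times N$ quaternionic Hermitian matrices with the $\Sp(N)$-conjugation action is (a translate of) $\mathfrak{sp}(N)$ with its adjoint action, that the natural invariant inner product makes $\mathcal{O}_{\blam}$ symplectic, and, crucially, that the elementary diagonal map really is the $T^N$ moment map---the convexity and connectedness theorems are then applied as black boxes, but their relevance in the quaternionic setting stands or falls on getting this identification right. A lesser technical point is confirming that the Gram map is genuinely a locally trivial fiber bundle, which follows from freeness and properness of the $\Sp(d)$-action but should be recorded explicitly.
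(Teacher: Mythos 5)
Your first half---passing to Gram matrices and exhibiting $\frames_{\blam}(\br)$ as a principal $\Sp(d)$-bundle (equivalently, a quotient with connected fibers) over the set of quaternionic Hermitian matrices with spectrum $\widetilde{\blam}=(\lambda_1,\dots,\lambda_d,0,\dots,0)$ and diagonal $\br$---matches the paper. The gap is in the second half: the symplectic argument you import from the complex case does not survive the passage to $\quat$. The set $\hermitian(N)$ of quaternionic Hermitian matrices with the $\Sp(N)$-conjugation action is \emph{not} a translate of $\mathfrak{sp}(N)$ with its adjoint (or coadjoint) action: $\mathfrak{sp}(N)$ consists of skew-Hermitian matrices and has real dimension $2N^2+N$, while $\hermitian(N)$ has dimension $2N^2-N$, so the identification you flag as the crucial dictionary simply fails. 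The orbit $\mathcal{O}_{\widetilde{\blam}}$ is an orbit of the isotropy representation of the symmetric space $\SU(2N)/\Sp(N)$ (the adjoint action of $\Sp(N)$ on the $(-1)$-eigenspace $\mathfrak{p}\simeq\hermitian_0(N)$ of the Cartan involution), not a coadjoint orbit of $\Sp(N)$, and such orbits are in general not symplectic at all: already for $d=1$ the orbit is the quaternionic projective space $\quat P^{N-1}=\Sp(N)/(\Sp(1)\times\Sp(N-1))$, which has vanishing second de Rham cohomology and therefore admits no symplectic form, hence no Hamiltonian $T^N$-action for which the diagonal map could be a moment map. Consequently Atiyah's connectedness theorem cannot be applied to $\mu|_{\mathcal{O}_{\widetilde{\blam}}}$, and the step ``every fiber of the diagonal map is connected'' is unsupported as stated.

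The paper's proof repairs exactly this point by replacing the symplectic toolkit with the theory of isoparametric submanifolds: after translating by $\frac{\sigma}{N}\Id_N$ to land in $\hermitian_0(N)$, the orbit $\mathcal{O}_{\widetilde{\Lambda}}$ is parallel to an isoparametric submanifold, the diagonal map becomes the orthogonal projection onto the normal space $\mathfrak{a}$ (which plays the role of the moment map), Terng's theorem gives convexity of the image, and Mare's connectedness theorem (\Cref{thm:Mare}) gives connectedness of the non-empty fibers because all multiplicities in this setting equal $4\geq 2$. If you want to salvage your outline, substitute \Cref{thm:Mare} for Atiyah's theorem at the step in question (noting the multiplicity hypothesis, which is also exactly what breaks in the real case, where the multiplicities are $1$ and some fibers are disconnected); the rest of your argument---surjectivity of the Gram map, freeness and properness of the $\Sp(d)$-action, connectedness of $\Sp(d)$, and local path-connectedness of the real algebraic set $\frames_{\blam}(\br)$ to upgrade connectedness to path-connectedness---is sound and parallels the paper.
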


Since the empty set is trivially path-connected, the substance of \Cref{thm:connected} is that all of the non-empty $\frames_{\blam}(\br)$ spaces are path-connected.

Note that the condition in \Cref{thm:existence} is exactly the same as that given by Casazza and Leon in the real and complex cases~\cite{Casazza:2010ti}. In other words, for given $\blam$ and $\br$ either all three of $\F^{\R^d,N}_{\blam}(\br)$, $\F^{\C^d,N}_{\blam}(\br)$, and $\frames_{\blam}(\br)$ are empty, or all three are non-empty. Likewise, the result in \Cref{thm:connected} is the same as in the complex case~\cite{NeedhamSGC}, so that $\F^{\C^d,N}_{\blam}(\br)$ and $\frames_{\blam}(\br)$ are both path-connected. In particular, \Cref{thm:existence,thm:connected} imply that all spaces of quaternionic unit-norm tight frames are non-empty and path-connected, so the standard frame homotopy conjecture is true in the quaternionic setting.

Our strategy for proving the complex version of \Cref{thm:connected} in~\cite{NeedhamSGC} was based on symplectic geometry, which is extremely well-suited to the complex setting, but not to either the real or quaternionic settings. In this paper, we instead identify (equivalence classes of) frames with fixed frame spectrum $\blam$ with an adjoint orbit of the action of the symplectic group\footnote{The (compact) symplectic group is the quaternionic analog of the unitary group; see \Cref{sub:quaternions} for the definition.} $\Sp(N)$ on the space of $N \times N$ Hermitian matrices, which is the natural home of the Gram matrix $F^\ast F$ associated to a frame $F$. This is not special to quaternionic frames: one can make an analogous identification for real and complex frames, so we view this as a unifying perspective on all three classes of frames.

The advantage is that such adjoint orbits are extremely nice geometrically. In particular, they are motivating special cases of both Kostant's convexity theorem~\cite{Kostant:1973ff} and of the theory of isoparametric submanifolds~\cite[Chapter~6]{Palais:1988ks}, and the basic strategy for proving \Cref{thm:existence,thm:connected} is to apply those very general tools to these specific problems.

Specifically, since $\blam$ and $\br$ are essentially the spectrum and diagonal entries, respectively, of the Gram matrix $F^\ast F$, the real and complex versions of \Cref{thm:existence} can be proved using the Schur--Horn theorem~\cite{Antezana:2007ci,Tropp:2005do}. Likewise, we will prove \Cref{thm:existence} using a quaternionic Schur--Horn theorem. This quaternionic version of Schur--Horn is an easy consequence of Kostant's convexity theorem, but its statement does not seem to be readily accessible in the literature (though see~\cite[Example~8.6]{kobert_spectrahedral_2021}). Since it may be of some independent interest, we give a statement and proof in \Cref{thm:quaternionic Schur-Horn}. On the other hand, adjoint orbits (and, more generally, isoparametric submanifolds) share many of the nice features of Hamiltonian manifolds~\cite{Terng:1986hg,Mare:2005eo}. In particular, in this setting we have access to a version of Atiyah's connectedness theorem~\cite{Atiyah:1982ih} (see \Cref{thm:Mare}), and with that tool in hand the proof of \Cref{thm:connected} goes much as it did in the complex case.

\section{Quaternions and Quaternionic Frames} 
\label{sec:background}
\subsection{Quaternions} 
\label{sub:quaternions}
In thinking about $\quat^m$ as a vector space over $\quat$, some care is required. First, recall that $\quat$ is a skew field whose elements can be written as 
\[
	a + b \I + c \J + d \K
\]
with multiplication given by the identities
\[
	\I^2 = \J^2 = \K^2 = \I \J \K = -1,
\]
which in particular implies $\I \J = \K$, $\J \I = -\K$, etc. The products of distinct elements of $\{\I,\J,\K\}$ are encoded in the diagram
\[
	\begin{tikzpicture}[decoration = {markings,
	    mark = at position 0.999 with {\arrow[>=stealth]{>}}
	  }
	  ]
	  \path (90 :1cm) node (i) {$\I$}
	        (330:1cm) node (j) {$\J$}
	        (210:1cm) node (k) {$\K$};
	  \draw[postaction = decorate] (i) to[bend left=45] (j);
	  \draw[postaction = decorate] (j) to[bend left=45] (k);
	  \draw[postaction = decorate] (k) to[bend left=45] (i);
	\end{tikzpicture}
\]
which for example says that $\K \I = \J$ since the product follows arrows with their correct orientation and $\K \J = -\I$ since this product reverses directions of arrows.

Instead of thinking in terms of quadruples of real numbers, it is also often convenient to interpret quaternions as pairs of complex numbers:
\[
	q = a+b\I+c\J+d\K = (a+b\I) + (c+d\I)\J.
\]
This makes it easy to identify an element of $\quat$ with a $2 \times 2$ complex matrix:
\begin{equation}\label{eq:H to C map}
	z + w\J \mapsto \begin{bmatrix} z & w \\ -\overline{w} & \overline{z} \end{bmatrix}.
\end{equation}
This map is an algebra isomorphism between $\quat$ and the collection of $2 \times 2$ complex matrices of the given form.

The \emph{conjugate} of a quaternion $q =a + b\I+c\J+d \K \in \quat$ is defined by
\[
	\overline{q} = a - b\I-c\J-d\K,
\]
and the \emph{modulus} by
\[
	|q| = \sqrt{q \overline{q}} = \sqrt{a^2+b^2+c^2+d^2}.
\]

Since quaternionic multiplication is non-commutative, we have to distinguish between left and right vector spaces over $\quat$. In this paper we will follow Waldron's conventions~\cite{Waldron:2020ti} and consider $\quat^m$ as a right vector space over $\quat$, meaning that scalar multiplication happens on the right. This is done so that left matrix multiplication is linear: for $v_1,\dots , v_N \in \quat^d$ thought of as column vectors, $\alpha_1,\dots , \alpha_N \in \quat$, and a matrix $A \in \quat^{m \times d}$, we have
\[
	A(v_1 \alpha_1 + \dots + v_N \alpha_N) = (A v_1) \alpha_1 + \dots + (A v_N) \alpha_N.
\]

Much of linear algebra over $\quat$ goes through as over $\R$ or $\C$; see~\cite{Zhang:1997cd,Rodman:2014ue} for more, but in particular recall that $\quat^m$ has a standard $\quat$-valued Hermitian inner product given by
\begin{equation}\label{eq:inner product}
	\langle v, w \rangle = \sum_i \overline{w}_j v_j \in \quat
\end{equation}
for all $v,w \in \quat^m$. The \emph{Frobenius inner product} on matrices $A,B \in \quat^{m \times k}$ is given by
\[
	\langle A, B \rangle_F := \tr(B^\ast A),
\]
where, as usual, $B^\ast$ is the conjugate transpose of the matrix $B$. This agrees with the Hermitian inner product~\eqref{eq:inner product} on vectorized versions of the matrices.

The quaternionic analog of the unitary group is usually called the \emph{symplectic group} (sometimes the \emph{compact symplectic group}) and denoted $\Sp(m)$. That is, a $m \times m$ quaternionic matrix $U \in \quat^{m \times m}$ is symplectic if
\[
	\langle U v, U w \rangle = \langle v , w \rangle
\]
for all $v,w \in \quat^m$; equivalently, $U^\ast U = \Id_m$, the $m \times m$ identity matrix. $\Sp(m)$ is a compact, semisimple Lie group with type-$C$ Lie algebra $\mathfrak{sp}(m)$ consisting of the skew-Hermitian $m \times m$ quaternionic matrices; that is, those matrices $A$ so that $A^\ast = -A$. A simple parameter count shows that
\[
	\dim_\R \Sp(m) = \dim_\R \mathfrak{sp}(m) = 3m+4\frac{m(m-1)}{2} = 2m^2+m.
\]

Finally, we introduce the notation
\[
	\hermitian(m) = \{A \in \quat^{m \times m} : A^\ast = A\}
\]
for the $(2m^2-m)$-dimensional real vector space of $m \times m$ Hermitian quaternionic matrices, and let $\hermitian_0(m)$ be the subspace of traceless Hermitian matrices.

\subsection{Quaternionic Frames} 
\label{sub:quaternionic_frames}
Frames in $\quat^d$ are defined in the usual way~\cite{Waldron:2020ti,Khokulan:2017ic,Sharma:2019js,Virender:2020ua}: a sequence $(f_i)$ of vectors in $\quat^d$ is a \emph{frame} if there exist $0<A\leq B < \infty$ so that
\begin{equation}\label{eq:frame inequality}
	A\|v\|^2 \leq \sum_i |\langle v, f_i\rangle|^2 \leq B \|v\|^2.
\end{equation}
For finite collections of vectors the upper bound is automatically satisfied with $B = \sum_i \|f_i\|^2$ and the lower bound is satisfied when the $f_i$ span $\quat^d$. In other words, a finite collection of vectors $f_1, \dots , f_N \in \quat^d$ is a frame for $\quat^d$ if and only if $\{f_1, \dots , f_N\}$ is a spanning set for $\quat^d$. We will use $\frames$ to denote the collection of all frames consisting of $N$ vectors in $\quat^d$. 

A frame is called \emph{tight} if we can choose $A=B$ in \eqref{eq:frame inequality}, and \emph{Parseval} if $A=B=1$. We get a nice alternative characterization of these frames by introducing some operators. To do so, we will usually identify a frame $f_1, \dots , f_N$ with the $d \times N$ matrix $F = [f_1 | \dots | f_N ] \in \quat^{d \times N}$ whose columns are the frame vectors, and we will often just write $F \in \frames$. The \emph{analysis operator} associated to the frame is the map $\quat^d \to \quat^N$ given by
\[
	v \mapsto (\langle v, f_1 \rangle , \dots , \langle v, f_N\rangle ) = F^\ast v,
\]
and the \emph{synthesis operator} is the map $\quat^N \to \quat^d$ given by
\[
	w \mapsto \sum_{i=1}^N f_i w_i = F w.
\]

Composing these two operators one way gives the \emph{frame operator} $S:\quat^d \to \quat^d$ given by
\[
	S(v) = \sum_{i=1}^N f_i \langle v, f_i \rangle = FF^\ast v.
\]
In this interpretation, a tight frame has frame operator $S = FF^\ast = A \Id_d$, so in particular a Parseval frame satisfies $FF^\ast = \Id_d$. Composing in the other way gives the \emph{Gram matrix} $F^\ast F$ whose entries are the pairwise inner products of the frame vectors. For real and complex frames, cyclic invariance of the trace immediately implies that $\tr(FF^\ast) = \tr(F^\ast F)$, which is an important and frequently-used identity. Over the quaternions, the trace of a product of matrices is not generally invariant under cyclic permutations of terms, but the real part is: $\Re(\tr(AB)) = \Re(\tr(BA))$ for quaternionic matrices $A$ and $B$ of dimensions for which these products make sense. Since the frame operator and Gram matrix are both Hermitian, and hence have real eigenvalues and real trace, it follows that
\begin{equation}\label{eq:cyclic trace}
	\tr(FF^\ast) = \Re(\tr(FF^\ast)) = \Re(\tr(F^\ast F)) = \tr(F^\ast F) = \sum_{i=1}^N \|f_i\|^2.
\end{equation}

In fact, since there is a singular value decomposition for quaternionic matrices~\cite{Zhang:1997cd,Rodman:2014ue}, the usual argument from the real and complex cases shows that the frame operator $FF^\ast \in \hermitian(d)$ and the Gram matrix $F^\ast F \in \hermitian(N)$ have the same nonzero (right) eigenvalues. In particular:

\begin{prop}\label{prop:spectrum}
	If $F \in \frames$ has frame operator $FF^\ast$ with spectrum $\blam = (\lambda_1, \dots , \lambda_d) \in \R_+^d$,\footnote{Since the columns of $F$ are a spanning set for $\quat^d$, the frame operator $FF^\ast$ must be nonsingular, so we know $\lambda_i > 0$ for all $i=1,\dots, d$.} then the Gram matrix $F^\ast F$ has spectrum $\widetilde{\blam} := (\lambda_1, \dots , \lambda_d, 0 , \dots , 0)$.
\end{prop}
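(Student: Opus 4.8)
The plan is to deduce the result from the quaternionic singular value decomposition, exactly as one does over $\R$ and $\C$. First I would invoke the SVD for quaternionic matrices~\cite{Zhang:1997cd,Rodman:2014ue}: write $F = U \Sigma V^\ast$ where $U \in \Sp(d)$, $V \in \Sp(N)$, and $\Sigma \in \R^{d \times N}$ is the ``rectangular diagonal'' matrix whose $(i,i)$ entries are the singular values $\sigma_1 \geq \dots \geq \sigma_d \geq 0$ of $F$ and whose other entries vanish. Because the columns of $F$ span $\quat^d$, the matrix $F$ has rank $d$, so in fact $\sigma_i > 0$ for all $i$; this is the same observation already recorded in the footnote to the statement.

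Next I would just compute the two compositions. For the frame operator,
\[
	FF^\ast = U \Sigma V^\ast V \Sigma^\ast U^\ast = U (\Sigma \Sigma^\ast) U^\ast ,
\]
and $\Sigma \Sigma^\ast = \diag(\sigma_1^2, \dots , \sigma_d^2) \in \R^{d \times d}$ by the block structure of $\Sigma$. Since $U$ is symplectic and the $\sigma_i^2$ are real, conjugation by $U$ preserves the spectrum: if $A v = v \mu$ with $\mu \in \R$ then $(UAU^\ast)(Uv) = (Uv)\mu$, and since a real scalar commutes with everything there is no subtlety about right versus left eigenvalues here. Hence the spectrum of $FF^\ast$ is $\{\sigma_1^2, \dots , \sigma_d^2\}$, so $\blam$ agrees with $(\sigma_1^2, \dots , \sigma_d^2)$ as a multiset. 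Similarly,
\[
	F^\ast F = V \Sigma^\ast U^\ast U \Sigma V^\ast = V (\Sigma^\ast \Sigma) V^\ast ,
\]
and now $\Sigma^\ast \Sigma = \diag(\sigma_1^2, \dots , \sigma_d^2, 0, \dots , 0) \in \R^{N \times N}$, again purely from the block structure of $\Sigma$. Conjugating by the symplectic matrix $V$ as before shows the spectrum of $F^\ast F$ is $(\sigma_1^2, \dots , \sigma_d^2, 0, \dots , 0)$, which is exactly $\widetilde{\blam}$.

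The only point that requires genuine care --- and which is what ``the usual argument'' is tacitly invoking --- is the meaning of ``spectrum'' for a matrix in $\hermitian(m)$ together with the precise form of the quaternionic SVD. So before the computation I would recall (or cite) two standard facts about quaternionic linear algebra~\cite{Zhang:1997cd,Rodman:2014ue}: every $A \in \hermitian(m)$ can be written $A = U D U^\ast$ with $U \in \Sp(m)$ and $D$ real diagonal, and the resulting multiset of real right eigenvalues does not depend on the diagonalization; and $F$ admits a factorization $U \Sigma V^\ast$ of the stated type. Given these inputs the argument above is entirely formal, so I do not expect a real obstacle --- the only work is in lining up the quaternionic analogues of the classical statements, and in being explicit that $\Sigma\Sigma^\ast$ and $\Sigma^\ast\Sigma$ are literally real diagonal matrices so that no non-commutativity issues arise.
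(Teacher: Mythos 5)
Your proposal is correct and follows essentially the same route as the paper, which likewise deduces the result from the quaternionic singular value decomposition of~\cite{Zhang:1997cd,Rodman:2014ue} via ``the usual argument from the real and complex cases.'' You have simply written out the details (the computation of $U\Sigma\Sigma^\ast U^\ast$ and $V\Sigma^\ast\Sigma V^\ast$, and the observation that conjugation by a symplectic matrix preserves real right eigenvalues) that the paper leaves implicit.
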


Since we are interested in frames with specified spectrum of the frame operator, we introduce some terminology and notation:

\begin{definition}\label{def:frame spectrum}
	If $F \in \frames$, call the spectrum of $FF^\ast$ the \emph{frame spectrum} of $F$. For given $\blam = (\lambda_1, \dots , \lambda_d) \in \R_+^d$, let $\frames_{\blam}$ be the set of all frames in $\frames$ with frame spectrum equal to $\blam$.
\end{definition}

Since Gram matrices are positive semidefinite and the spectral decomposition of Hermitian quaternionic matrices works as expected~\cite{Zhang:1997cd,Rodman:2014ue}, the same proof as in the real or complex case shows that every positive semidefinite Hermitian quaternionic matrix is the Gram matrix of a collection of vectors which is a frame for its span, and that this collection is uniquely determined up to the action of the symplectic group. In other words, if $M \in \hermitian(N)$ is positive semidefinite of rank $d$, then there exists $F = [f_1 | \dots | f_N] \in \frames \subset \quat^{d \times N}$ so that $M = F^\ast F$. Notice that, for $U \in \Sp(d)$, the frame $UF$ has the same Gram matrix: $(UF)^\ast (UF) = F^\ast U^\ast U F = F^\ast F = M$. Conversely, if $G \in \quat^{d \times N}$ so that $G^\ast G = M$, then $G = UF$ for some $U \in \Sp(d)$. 

We summarize the above discussion in the following proposition:

\begin{prop}\label{prop:equivalence classes}
	$\Sp(d)$-equivalence classes of frames in $\frames$ are uniquely determined by their Gram matrices, which consist of all rank-$d$ positive semidefinite elements of $\hermitian(N)$.
\end{prop}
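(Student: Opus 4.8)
The plan is to split the statement into two independent claims and attack each with standard quaternionic linear algebra. The first claim is that the map $F \mapsto F^\ast F$ sends $\frames$ \emph{onto} the set of rank-$d$ positive semidefinite elements of $\hermitian(N)$; the second is that this map is injective modulo the left action of $\Sp(d)$. The only non-elementary inputs needed are \Cref{prop:spectrum} (which handles the rank bookkeeping) and the spectral theorem for quaternionic Hermitian matrices from~\cite{Zhang:1997cd,Rodman:2014ue}; everything else is a formal manipulation, once one is careful about the order in which quaternionic matrices are multiplied.

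For the first claim, the forward inclusion is immediate: for $F\in\frames$ the matrix $F^\ast F$ is Hermitian, it is positive semidefinite because $w^\ast (F^\ast F) w = (Fw)^\ast (Fw) = \|Fw\|^2 \geq 0$ for every $w\in\quat^N$, and by \Cref{prop:spectrum} it has rank exactly $d$. For the reverse inclusion I would take $M\in\hermitian(N)$ positive semidefinite of rank $d$, diagonalize it as $M = V\Lambda V^\ast$ with $V\in\Sp(N)$ and $\Lambda = \diag(\lambda_1,\dots,\lambda_d,0,\dots,0)$ with each $\lambda_i>0$, factor $\Lambda = D^\ast D$ where $D\in\quat^{d\times N}$ has $\sqrt{\lambda_i}$ in position $(i,i)$ and zeros elsewhere, and set $F := DV^\ast$. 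Then $F^\ast F = V D^\ast D V^\ast = V\Lambda V^\ast = M$, and since $\operatorname{rank} F = \operatorname{rank} M = d$ the columns of $F$ span $\quat^d$, so $F\in\frames$.

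For the second claim, one direction is the one-line identity $(UF)^\ast(UF) = F^\ast U^\ast U F = F^\ast F$ for $U\in\Sp(d)$. For the converse I would start with $F,G\in\quat^{d\times N}$ of full row rank $d$ with $F^\ast F = G^\ast G =: M$, note that $FF^\ast\in\hermitian(d)$ is then invertible, and define $U := GF^\ast(FF^\ast)^{-1}$. Writing $P := F^\ast(FF^\ast)^{-1}F$, one checks $P^\ast = P$ and $P^2 = P$, so $P$ is the orthogonal projection of $\quat^N$ onto the row space of $F$; since that row space coincides with the column space of $F^\ast F = M$ and hence with the row space of $G$, we have $GP = G$, so $UF = GF^\ast(FF^\ast)^{-1}F = GP = G$. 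Finally $U^\ast U = (FF^\ast)^{-1}F(G^\ast G)F^\ast(FF^\ast)^{-1} = (FF^\ast)^{-1}F M F^\ast(FF^\ast)^{-1} = (FF^\ast)^{-1}(FF^\ast)(FF^\ast)(FF^\ast)^{-1} = \Id_d$, so $U\in\Sp(d)$.

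I expect the only real friction to be bookkeeping rather than conceptual: one must make sure that ``rank'', positive semidefiniteness, the two-sided spectral theorem, and the identification of the row space of $F$ with the column space of $F^\ast F$ all behave over $\quat$ exactly as over $\C$ despite non-commutativity. These facts are available in~\cite{Zhang:1997cd,Rodman:2014ue}, and the projection computation $P^2=P=P^\ast$ with $\Im P$ equal to the row space of $F$ is the one place where I would write out a short explicit verification rather than merely cite; the rest follows by direct substitution.
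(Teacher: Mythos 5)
Your proposal is correct and follows essentially the same route as the paper, which simply invokes the quaternionic spectral theorem and observes that "the same proof as in the real or complex case" applies; you have merely written out the details that the paper leaves implicit (the factorization $F = DV^\ast$ for surjectivity and the explicit $U = GF^\ast(FF^\ast)^{-1}$ with the projection $P = F^\ast(FF^\ast)^{-1}F$ for uniqueness up to $\Sp(d)$). Your computations go through over $\quat$ exactly as you anticipate, so no further changes are needed.
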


\section{Adjoint Orbits, Convexity, and Isoparametric Submanifolds} 
\label{sec:isoparametric}

Our next goal is to connect the story of Gram matrices to adjoint orbits and isoparametric submanifolds. We will largely follow Mare's discussion~\cite[Example~5.4]{Mare:2005eo}; see \cite[\S III.7]{Helgason:1978vb} for a more general exposition of Cartan decompositions.

Thinking of a square quaternionic matrix $A \in \quat^{m \times m}$ as
\[
	A = Z + W \J
\]
for $Z,W \in \C^{m \times m}$, we can define a map $\Psi_m:\quat^{m \times m} \to \C^{2m \times 2m}$ analogously to~\eqref{eq:H to C map}:
\[
	\Psi_m(Z+W\J) :=  \begin{bmatrix} Z & W \\ -\overline{W} & \overline{Z} \end{bmatrix}.
\]

If $A \in \quat^{m \times m}$ is invertible, then $\Psi_m(A)$ is as well. Moreover, the image $\Psi_m(\Sp(m))$ of $\Sp(m)$ under this map is a subgroup of $\SU(2m)$, the group of $2m \times 2m$ unitary matrices with determinant 1, and in fact $\Psi_m(\Sp(m))$ is precisely the fixed point set inside $\SU(2m)$ of the involution $\sigma:\C^{2m \times 2m} \to \C^{2m \times 2m}$ given by
\[
	\sigma(M) = \Omega^\ast \overline{M} \Omega,
\]
where $\Omega = \begin{bmatrix} 0 & \Id_m \\ -\Id_m & 0 \end{bmatrix}$. 

Passing to the Lie algebra gives the Cartan decomposition
\[
	\mathfrak{su}(2m) = \mathfrak{k} \oplus \mathfrak{p},
\]
where $\mathfrak{k}$ is the $(+1)$-eigenspace of the linearization $d_{\Id_m}\sigma: \mathfrak{su}(2m) \to \mathfrak{su}(2m)$ and $\mathfrak{p}$ is the $(-1)$-eigenspace.\footnote{Since $d_{\Id_m}\sigma(A) = -A^\ast$ is an involution, its only eigenvalues are $\pm 1$.} The subalgebra $\mathfrak{k}$ is just the Lie algebra of $\Psi_m(\Sp(m)) \simeq \Sp(m)$, so $\mathfrak{k} \simeq \mathfrak{sp}(m)$, with an explicit isomorphism given by $\Psi_m|_{\mathfrak{sp}(m)}: \mathfrak{sp}(m) \to \mathfrak{k}$. 

Notice that $[\mathfrak{k},\mathfrak{p}] \subseteq \mathfrak{p}$, and so the adjoint action of $\Psi_m(\Sp(m))$ on $\mathfrak{su}(2m)$ restricts to an action on $\mathfrak{p}$. At first glance this appears slightly esoteric, but we can use $\Psi_m$ to relate $\mathfrak{p}$ to a more familiar collection of matrices. Specifically, $\Psi_m|_{\hermitian_0(m)}$ gives an $\Sp(m)$-equivariant linear isomorphism between the traceless Hermitian matrices $\hermitian_0(m)$ and $\mathfrak{p}$, and the adjoint action of $\Psi_m(\Sp(m))$ on $\mathfrak{p}$ corresponds to the conjugation action of $\Sp(m)$ on $\hermitian_0(m)$.

Under this isomorphism, we can identify the standard maximal abelian subspace of $\mathfrak{p}$ with the set $\mathfrak{a} \subset \hermitian_0(m)$ of real diagonal $m \times m$ matrices with trace $0$. The orbit $\mathcal{O}_\Lambda := \Sp(m)\cdot \Lambda$ of a point $\Lambda = \diag(\lambda_1, \dots , \lambda_m) \in \mathfrak{a}$ is simply
\[
	\mathcal{O}_\Lambda = \{U \Lambda U^\ast : U \in \Sp(m)\},
\]
which is precisely the collection of Hermitian $m \times m$ quaternionic matrices with spectrum $(\lambda_1, \dots , \lambda_m)$. At this point, it is hopefully clear that we intend to use \Cref{prop:spectrum,prop:equivalence classes} to relate $\Sp(d)$-equivalence classes of frames to orbits of this form. Before doing so, let's see what desirable features these orbits have.

Needless to say, we could have introduced these orbits without recourse to Cartan decompositions or adjoint actions. The point of approaching things in this slightly roundabout way, though, is that we can now easily see that $\mathcal{O}_\Lambda$ fits into both \emph{Kostant's convexity theorem} and the story of so-called \emph{isoparametric submanifolds}.

\subsection{Kostant's Convexity Theorem} 
\label{sub:Kostant}

Consider the Cartan decomposition
\[
	\mathfrak{G} = \mathfrak{K} \oplus \mathfrak{P}
\]
of the Lie algebra $\mathfrak{G}$ of a semisimple Lie group $G$.\footnote{In defiance of the usual convention, we are using capital fraktur letters here for the Lie algebra and its subspaces, so as not to confuse the general $\mathfrak{K}$ and $\mathfrak{P}$ discussed here with the specific $\mathfrak{k}$ and $\mathfrak{p}$ defined above.} Then, as above, $\mathfrak{K}$ is the Lie algebra of a compact Lie subgroup $K \subset G$, and the adjoint action of $K$ on $\mathfrak{G}$ restricts to an action on $\mathfrak{P}$. Let $\mathfrak{A} \subset \mathfrak{P}$ be a maximal abelian subspace and let $P:\mathfrak{P} \to \mathfrak{A}$ be the orthogonal (with respect to the Killing form) projection. The \emph{Weyl group} $W$ associated with the pair $(\mathfrak{A},\mathfrak{G})$ is the finite group $N_K(\mathfrak{A})/Z_K(\mathfrak{A})$, where $N_K(\mathfrak{A})$ is the normalizer of $\mathfrak{A}$ in $K$, and $Z_K(\mathfrak{A})$ is its centralizer.

Now, let $a \in \mathfrak{A} \subset \mathfrak{P}$ and let $\mathcal{O}_a$ be the orbit of $a$ under the adjoint action of $K$ on $\mathfrak{P}$. Then Kostant's convexity theorem characterizes the image of $\mathcal{O}_a$ under $P$:

\begin{thm}[Kostant~\cite{Kostant:1973ff}]\label{thm:Kostant}
	$P(\mathcal{O}_a) = \conv(W\cdot a)$, the convex hull of the Weyl orbit of $a \in \mathfrak{A}$.
\end{thm}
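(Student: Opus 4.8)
The plan is to prove the two inclusions $P(\mathcal{O}_a) \subseteq \conv(W \cdot a)$ and $\conv(W \cdot a) \subseteq P(\mathcal{O}_a)$ separately, following the standard route through the moment-map / linear-functional picture. First I would establish the easy inclusion $P(\mathcal{O}_a) \subseteq \conv(W \cdot a)$. The key observation is that $P(\mathcal{O}_a)$ is a compact, $W$-invariant subset of $\mathfrak{A}$: compactness is clear since $K$ is compact and $P$ is continuous, and $W$-invariance follows because conjugating $a$ by an element of $N_K(\mathfrak{A})$ permutes the relevant data while commuting with $P$. Thus it suffices to show that for every linear functional $\xi$ on $\mathfrak{A}$, the maximum of $\xi$ over $P(\mathcal{O}_a)$ is attained at a point of $W \cdot a$. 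Extending $\xi$ to $\mathfrak{P}$ via $P$ and using that the Weyl chamber decomposition of $\mathfrak{A}$ is a fundamental domain, one reduces to the claim that $\langle \xi, P(\Ad_k a) \rangle = \langle \xi, \Ad_k a \rangle \leq \langle \xi, a \rangle$ whenever $a$ and $\xi$ both lie in the closed positive Weyl chamber. This last inequality is the content of the ``$\Ad$-orbits and the root-space decomposition'' lemma: writing $\Ad_k a - a$ as a sum over positive roots with the help of the gradient flow of $k \mapsto \langle \xi, \Ad_k a \rangle$ (whose critical points lie in $W \cdot a$), one sees the difference points into the cone dual to the chamber.

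For the reverse inclusion $\conv(W \cdot a) \subseteq P(\mathcal{O}_a)$, I would first note that $W \cdot a \subseteq P(\mathcal{O}_a)$ trivially, since each $w \in W$ is represented by some $k \in N_K(\mathfrak{A}) \subseteq K$ with $\Ad_k a = w \cdot a \in \mathfrak{A}$, so $P(\Ad_k a) = w \cdot a$. It then remains to prove that $P(\mathcal{O}_a)$ is convex. The cleanest argument here is the Atiyah--Guillemin--Sternberg style one: consider the function $f_\xi: \mathcal{O}_a \to \R$, $x \mapsto \langle \xi, P(x)\rangle$, for generic $\xi$, show it is a Morse--Bott function whose critical submanifolds are the $K$-orbits lying over the vertices $W \cdot a$ and whose indices and coindices are even, conclude the level sets $f_\xi^{-1}((-\infty, c])$ and hence $\mathcal{O}_a$ have connected fibers over $P$, and then bootstrap from the one-dimensional ($\mathrm{rank} = 1$) case by induction on the rank of the symmetric space. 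Alternatively, since this is the paper's warm-up and the relevant $\mathcal{O}_\Lambda$ are isoparametric submanifolds, one could instead invoke the local-to-global convexity result for isoparametric submanifolds (Terng--Thorbergsson) but that feels circular given the exposition; I would stick with the Morse-theoretic induction, which is self-contained.

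The main obstacle I anticipate is the induction step in the convexity proof: one must arrange, for a non-generic $\xi$ lying on a wall, that the fixed-point set of the corresponding subtorus (here, the connected subgroup of $K$ fixing $\xi$) is itself an orbit of the same type in a lower-rank symmetric subspace, so that $P$ restricted there is convex by the inductive hypothesis, and then glue these faces together. Handling the combinatorics of which subsets of $W \cdot a$ arise as faces — equivalently, controlling the face structure of $\conv(W \cdot a)$ and matching it to fixed-point components — is where the real work lies. For the purposes of this paper, though, we only need the statement as a black box applied to $\mathcal{O}_\Lambda \subset \hermitian_0(m)$ with $W$ the hyperoctahedral group acting by signed permutations on the diagonal (the type-$C$ Weyl group), so I would present the proof at the level of a sketch citing \cite{Kostant:1973ff} for the full details and \cite{Atiyah:1982ih} for the Morse-theoretic packaging.
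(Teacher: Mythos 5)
First, note that the paper does not prove this statement at all: \Cref{thm:Kostant} is imported verbatim from Kostant's paper and used as a black box (the only ``proof'' in the paper is the deduction of \Cref{thm:quaternionic Schur-Horn} from it), so your decision to cite \cite{Kostant:1973ff} for the details is consistent with what the paper actually does, and your sketch of the easy inclusion $P(\mathcal{O}_a)\subseteq\conv(W\cdot a)$ (supporting functionals, $W$-invariance, critical points of $x\mapsto\langle\xi,x\rangle$ on $\mathcal{O}_a$ lying in $W\cdot a$ for regular $\xi$) is sound in outline. The genuine gap is in your strategy for the hard inclusion. You propose to prove convexity of $P(\mathcal{O}_a)$ by the Atiyah--Guillemin--Sternberg mechanism: Morse--Bott functions $f_\xi=\langle\xi,P(\cdot)\rangle$ with even indices and coindices, hence connected level sets, then induction on rank. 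But evenness of the indices is special to the Hamiltonian/complex situation (and to settings with all multiplicities $\geq 2$, such as the quaternionic one); in the full generality in which \Cref{thm:Kostant} is stated and used, the multiplicities can equal $1$ --- e.g.\ $G=\operatorname{SL}_n(\R)$, $K=\operatorname{SO}(n)$, $\mathfrak{P}$ the traceless symmetric matrices --- and there the height functions have odd-index critical manifolds and the fibers of $P$ genuinely fail to be connected. The paper itself emphasizes exactly this point: fiber-connectedness is the content of Mare's \Cref{thm:Mare}, which requires multiplicities $\geq 2$ and fails for real frames. So a proof of Kostant's convexity theorem cannot route through connectedness of the fibers of $P$; Kostant's argument (and Terng's isoparametric generalization of the convexity statement) establish convexity of the image by different means, and your induction-on-rank step as described would break precisely in the multiplicity-one cases where the theorem is still true.

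A second, more concrete error is your identification of $W$ in the intended application: for the pair $(\mathfrak{a},\mathfrak{su}(2m))$ with $K=\Sp(m)$, the restricted root system of the symmetric space $\SU(2m)/\Sp(m)$ is of type $A_{m-1}$ (with multiplicity $4$), so the Weyl group acting on $\mathfrak{a}$ is the symmetric group $S_m$ permuting diagonal entries --- not the hyperoctahedral group of signed permutations, which is the Weyl group of $K$ itself, not of the pair. This matters for the paper: if $W$ acted by signed permutations, the conclusion of \Cref{thm:quaternionic Schur-Horn} would be false (sign changes do not even preserve the trace constraint on diagonals), whereas the paper's deduction relies on $W\cong S_m$ so that $\conv(W\cdot\blam)=\conv(S_m\cdot\blam)$.
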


In the case where $G=\operatorname{SL}_n(\C)$ and $K = \SU(n)$, this is essentially the Schur--Horn theorem~\cite{schur1923uber,horn1954doubly}, which says that the diagonal entries of Hermitian matrices with fixed spectrum fill out the convex hull of the collection of vectors given by all possible re-orderings of the spectrum.

With $G=\SU(2m)$, $K=\Sp(m)$, and the Cartan decomposition $\mathfrak{su}(2m) = \mathfrak{k} \oplus \mathfrak{p}$ as above, \Cref{thm:Kostant} implies the following quaternionic analog of the Schur--Horn theorem (compare to~\cite[Example~8.6]{kobert_spectrahedral_2021}):

\begin{thm}\label{thm:quaternionic Schur-Horn}
	Let $\blam= (\lambda_1 , \dots , \lambda_m) \in \R^m$. Let $\hermitian_{\blam}(m)$ be the collection of quaternionic Hermitian $m \times m$ matrices with spectrum $\blam$. Let $\Delta: \hermitian(m) \to \R^m$ record the diagonal entries of a matrix. Then
	\[
		\Delta(\hermitian_{\blam}(m)) = \conv(S_m \cdot \blam),
	\]
	the convex hull of the permutation orbit of $\blam$.
\end{thm}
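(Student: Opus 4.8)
\emph{Overview and the plan.} I would read the statement off from \Cref{thm:Kostant}, applied to the Cartan decomposition $\mathfrak{su}(2m)=\mathfrak{k}\oplus\mathfrak{p}$ set up above, after (i) reducing to the traceless case, (ii) matching the three ingredients of Kostant's theorem---the orthogonal projection $P$, the adjoint orbit $\mathcal{O}_a$, and the Weyl group $W$---to their matrix-theoretic counterparts, and (iii) identifying $W$ with $S_m$. For the reduction, set $c=\frac1m\sum_i\lambda_i$ and $\blam^0=\blam-c(1,\dots,1)$, so $\blam^0$ is traceless. Conjugation commutes with adding the central matrix $c\Id_m$, so $\hermitian_{\blam}(m)=\hermitian_{\blam^0}(m)+c\Id_m$ and hence $\Delta(\hermitian_{\blam}(m))=\Delta(\hermitian_{\blam^0}(m))+c(1,\dots,1)$; since $S_m$ fixes $(1,\dots,1)$, it will suffice to prove the statement for $\blam^0$, i.e. we may assume $\sum_i\lambda_i=0$ and work inside $\hermitian_0(m)\cong\mathfrak{p}$.

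\emph{Matching the ingredients.} Under the $\Sp(m)$-equivariant isomorphism $\hermitian_0(m)\cong\mathfrak{p}$ from the text, the space $\mathfrak{a}$ of traceless real diagonal matrices is the standard maximal abelian subspace, and, as already noted, $\mathcal{O}_\Lambda=\{U\Lambda U^\ast:U\in\Sp(m)\}=\hermitian_{\blam}(m)$ for $\Lambda=\diag(\blam)$. The remaining point is that Kostant's projection $P\colon\mathfrak{p}\to\mathfrak{a}$ corresponds to $\Delta\colon\hermitian_0(m)\to\mathfrak{a}$. This should follow because the $\Sp(m)$-representation on $\hermitian_0(m)$ is irreducible (the symmetric space $\SU(2m)/\Sp(m)$ is irreducible), so the inner product induced on $\hermitian_0(m)$ by the Killing form is a positive multiple of the Frobenius inner product $(A,B)\mapsto\Re\tr(AB)$; and with respect to the Frobenius inner product the real diagonal matrices are orthogonal to the Hermitian matrices with vanishing diagonal. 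Since $A-\diag(\Delta(A))$ has vanishing diagonal, $\Delta$ (followed by $\diag$) is precisely orthogonal projection onto $\mathfrak{a}$, so \Cref{thm:Kostant} then yields $\Delta(\hermitian_{\blam}(m))=\conv(W\cdot\blam)$.

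\emph{The Weyl group.} It will remain to show that $W=N_{\Sp(m)}(\mathfrak{a})/Z_{\Sp(m)}(\mathfrak{a})$ acts on $\mathfrak{a}\cong\{v\in\R^m:\sum_i v_i=0\}$ as $S_m$ permuting coordinates. The permutation matrices (with entries $0$ and $1$) lie in $\Sp(m)$, normalize $\mathfrak{a}$, and realize every coordinate permutation, so $S_m\hookrightarrow W$; and $W$ is no larger because $\SU(2m)/\Sp(m)$ is the symmetric space of type $\mathrm{AII}$, whose restricted root system is $A_{m-1}$ (all root multiplicities equal to $4$), and the Weyl group of $A_{m-1}$ is $S_m$ --- see \cite[Ch.~X, Table~VI]{Helgason:1978vb}. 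Hence $\conv(W\cdot\blam)=\conv(S_m\cdot\blam)$, which completes the argument.

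\emph{Main obstacle, and an alternative.} The only step with real content is the identification of $W$: it is immediate from the classification of compact symmetric spaces, but a fully self-contained treatment would instead compute $N_{\Sp(m)}(\mathfrak{a})$ and $Z_{\Sp(m)}(\mathfrak{a})$ by hand. If one wishes to bypass the structure theory entirely, there is also a short elementary proof. The inclusion $\Delta(\hermitian_{\blam}(m))\subseteq\conv(S_m\cdot\blam)$ follows from the quaternionic spectral theorem: writing $A=U\Lambda U^\ast$ with $U\in\Sp(m)$ makes $\bigl(|U_{jk}|^2\bigr)_{j,k}$ doubly stochastic and $\Delta(A)=\bigl(|U_{jk}|^2\bigr)_{j,k}\,\blam$, so Birkhoff--von Neumann gives the claim. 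The reverse inclusion follows from the real Schur--Horn theorem together with the observation that the real orthogonal group $\mathrm{O}(m)$ sits inside $\Sp(m)$, so the real symmetric matrix realizing a prescribed majorized diagonal already lies in $\hermitian_{\blam}(m)$.
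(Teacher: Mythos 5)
Your main argument is correct and is essentially the paper's own proof: re-center to the traceless case via the shift $B\mapsto B-\tfrac{\sigma}{m}\Id_m$ and apply Kostant's convexity theorem to the Cartan decomposition $\mathfrak{su}(2m)=\mathfrak{k}\oplus\mathfrak{p}$; the only difference is that you supply justifications for the two identifications the paper merely asserts, namely that the Killing-orthogonal projection $\mathfrak{p}\to\mathfrak{a}$ is the diagonal map $\Delta$ (via irreducibility of the isotropy representation, so the invariant inner product is a multiple of the Frobenius one) and that the Weyl group of the pair is $S_m$ (via the $A_{m-1}$ restricted root system of type AII). Your sketched elementary alternative is also sound --- the inclusion $\subseteq$ from the quaternionic spectral theorem, double stochasticity of $\bigl(|U_{jk}|^2\bigr)$, and Birkhoff--von Neumann, and $\supseteq$ from real Schur--Horn together with $\mathrm{O}(m)\subset\Sp(m)$ --- and it bypasses the structure theory entirely, at the cost of losing the link to Kostant's theorem that the paper uses to frame the whole approach.
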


\begin{proof}
	First of all, if $\lambda_1 + \dots + \lambda_m = 0$, then $\hermitian_{\blam}(m) \subset \hermitian_0(m)$ corresponds to the orbit of $\Lambda = \diag(\lambda_1, \dots , \lambda_m)$ under the adjoint action of $\Sp(m)$ on $\mathfrak{p} \simeq \hermitian_0(m)$. Moreover, the Weyl group associated to $(\mathfrak{a},\mathfrak{su}(2m))$ is the symmetric group $S_m$ and the orthogonal projection $\mathfrak{p} \to \mathfrak{a}$ corresponds to the diagonal entry map $\Delta$, so \Cref{thm:Kostant} says exactly that
	\[
		\Delta(\hermitian_{\blam}(m)) = \conv(S_m \cdot \blam).
	\]
	
	If $\lambda_1 + \dots + \lambda_m \neq 0$, then we just need to re-center, which we do using the map $\tau: \hermitian_{\blam}(m) \to \mathcal{O}_\Lambda$ defined in \Cref{lem:re-center} below. In the notation of that lemma, \Cref{thm:Kostant} implies that 
	\[
		(\Delta \circ \tau)(\hermitian_{\blam}(m)) = \conv(t(S_m \cdot \blam)).
	\] 
	
	Since taking the convex hull is $t$-equivariant and since $\Delta \circ \tau = t \circ \Delta$, it follows that $\Delta(\hermitian_{\blam}(m)) = \conv(S_m \cdot \blam)$, as desired.
\end{proof}

\begin{lem}\label{lem:re-center}
	Let $\blam = (\lambda_1, \dots , \lambda_m) \in \R^m$, let $\sigma = \lambda_1 + \dots + \lambda_m$, and define $\Lambda := \diag(\lambda_1, \dots , \lambda_m)-\frac{\sigma}{m} \Id_m$. Then the map $B \mapsto B - \frac{\sigma}{m} \Id_m$ defines a diffeomorphism $\tau: \hermitian_{\blam}(m) \to \mathcal{O}_\Lambda$ which is equivariant with respect to the $\Sp(m)$ actions on domain and range and makes the following diagram commute:
	\[\begin{tikzcd}
		{\hermitian_{\blam}(m)} && {\mathcal{O}_\Lambda} \\
		\\
		{\R^m} && {\R^m}
		\arrow["\tau", from=1-1, to=1-3]
		\arrow["\Delta"', from=1-1, to=3-1] 
		\arrow["t", from=3-1, to=3-3]
		\arrow["\Delta", from=1-3, to=3-3]
	\end{tikzcd}\]
	Here $t: \R^m \to \R^m$ is the translation map $t(\boldsymbol{x}) := \boldsymbol{x}-\left(\frac{\sigma}{m}, \dots , \frac{\sigma}{m}\right)$.
\end{lem}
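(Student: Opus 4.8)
The plan is to verify each claimed property of $\tau$ by direct computation, since the map is nothing more than translation by a fixed scalar matrix. First I would confirm that $\tau$ is well-defined with the stated codomain. Note that $\Lambda = \diag(\lambda_1,\dots,\lambda_m) - \frac{\sigma}{m}\Id_m$ is traceless, so $\Lambda \in \mathfrak{a} \subset \hermitian_0(m)$ and $\mathcal{O}_\Lambda$ is defined; moreover, as observed in \Cref{sec:isoparametric}, $\mathcal{O}_\Lambda$ is \emph{exactly} the set of Hermitian $m \times m$ quaternionic matrices whose spectrum is $\left(\lambda_1 - \frac{\sigma}{m}, \dots , \lambda_m - \frac{\sigma}{m}\right)$. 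If $B \in \hermitian_{\blam}(m)$, then $B - \frac{\sigma}{m}\Id_m$ is Hermitian with spectrum obtained from $\blam$ by subtracting $\frac{\sigma}{m}$ from every eigenvalue, which is precisely the spectrum of $\Lambda$; hence $\tau(B) \in \mathcal{O}_\Lambda$.

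Next, $\tau$ is a diffeomorphism because the global translation $\hermitian(m) \to \hermitian(m)$, $A \mapsto A - \frac{\sigma}{m}\Id_m$, is a diffeomorphism of the ambient real vector space that carries the submanifold $\hermitian_{\blam}(m)$ onto the submanifold $\mathcal{O}_\Lambda$, with smooth inverse $C \mapsto C + \frac{\sigma}{m}\Id_m$; restricting to these submanifolds gives the desired diffeomorphism. For $\Sp(m)$-equivariance, for any $U \in \Sp(m)$ we have $U \Id_m U^\ast = \Id_m$, so
\[
	\tau(U B U^\ast) = U B U^\ast - \tfrac{\sigma}{m}\Id_m = U\left(B - \tfrac{\sigma}{m}\Id_m\right)U^\ast = U\, \tau(B)\, U^\ast.
\]
Finally, the diagram commutes because subtracting $\frac{\sigma}{m}\Id_m$ from a matrix subtracts $\frac{\sigma}{m}$ from each of its diagonal entries, so that $\Delta(\tau(B)) = \Delta(B) - \left(\frac{\sigma}{m}, \dots , \frac{\sigma}{m}\right) = t(\Delta(B))$.

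I do not expect any genuine obstacle here: the lemma is essentially bookkeeping needed to reduce the general case of the quaternionic Schur--Horn theorem to the traceless case covered directly by \Cref{thm:Kostant}. The only point that requires a moment's thought is the identification of $\mathcal{O}_\Lambda$ with the full spectral class $\hermitian_{\blam - (\sigma/m,\dots,\sigma/m)}(m)$, and this is immediate from the spectral decomposition of quaternionic Hermitian matrices already invoked in \Cref{sec:background}.
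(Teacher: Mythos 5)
Your proposal is correct and follows essentially the same route as the paper's own proof: direct verification that $\tau$ is a diffeomorphism, that conjugation by $U \in \Sp(m)$ commutes with subtracting $\frac{\sigma}{m}\Id_m$, and that $\Delta \circ \tau = t \circ \Delta$. The only difference is that you spell out the well-definedness (the spectral shift identifying the image with $\mathcal{O}_\Lambda$), which the paper asserts without elaboration, so your write-up is if anything slightly more detailed.
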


\begin{proof}
	First, $\tau$ is certainly well-defined and smooth with smooth inverse, so it is a diffeomorphism. 
	
	If $B \in \hermitian_{\blam}(m)$ and $U \in \Sp(d)$, then
	\[
		U \tau(B) U^\ast = U\left(B - \frac{\sigma}{m}\Id_m\right)U^\ast = UBU^\ast - \frac{\sigma}{m}\Id_m = \tau(UBU^\ast),
	\]
	so $\tau$ is $\Sp(m)$-equivariant.
	
	Finally, for $B \in \hermitian_{\blam}(m)$, 
	\[
		(\Delta \circ \tau)(B) = \Delta\left(B - \frac{\sigma}{m}\Id_m\right) = \Delta(B) - \left(\frac{\sigma}{m},\dots , \frac{\sigma}{m}\right) = (t \circ \Delta)(B),
	\]
	so the diagram commutes.
\end{proof}

\subsection{Isoparametric Submanifolds} 
\label{sub:isoparametric submanifolds}

Isoparametric hypersurfaces were first studied by Cartan~\cite{Cartan:1938kh,Cartan:1939gv,Cartan:1939wj,Cartan:tw,Nomizu:1975bq}; for a more modern and general introduction see \cite[Chapter~6]{Palais:1988ks} or~\cite{Terng:1985eu}. First, we give the definition, the details of which will not concern us greatly:

\begin{definition}\label{def:isoparametric}
	A submanifold $M$ of a Riemannian manifold $N$ is \emph{isoparametric} if its normal bundle $\nu(M)$ is flat and the principal curvatures along any parallel normal field of $M$ are constant.
\end{definition}

Principal orbits of isotropy representations are classic examples of isoparametric submanifolds. In the context of our story, we have:

\begin{prop}[{see, e.g.,~\cite[Example~6.5.6]{Palais:1988ks}}]\label{prop:orbits are isoparametric}
	If $\Lambda \in \mathfrak{a}$ is generic (i.e., $\lambda_i \neq \lambda_j$ for all $i \neq j$), then $\mathcal{O}_\Lambda$ is an isoparametric submanifold of $\hermitian_0(N)$, thought of as a Riemannian manifold by taking the Frobenius inner product on each tangent space. Moreover, the normal space to $\mathcal{O}_\Lambda$ at $\Lambda$ is just $\mathfrak{a}$. 
	
	If $\Lambda$ is not generic, then $\mathcal{O}_\Lambda$ is parallel to an isoparametric submanifold of $\hermitian_0(N)$.
\end{prop}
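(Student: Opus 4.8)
The plan is to recognize $\mathcal{O}_\Lambda$ as an orbit of an isotropy ($s$-)representation and then invoke the general theory of Palais and Terng. The Cartan decomposition $\mathfrak{su}(2N) = \mathfrak{k}\oplus\mathfrak{p}$ constructed above is exactly the one attached to the symmetric pair $(\SU(2N),\Sp(N))$, and under the isomorphism $\Psi_N|_{\hermitian_0(N)}$ the isotropy module $\mathfrak{p}$ becomes $\hermitian_0(N)$ with $\Sp(N)$ acting by conjugation; thus $\mathcal{O}_\Lambda = \Sp(N)\cdot\Lambda$ is precisely the orbit of the point $\Lambda\in\mathfrak{a}\subset\mathfrak{p}$ under this $s$-representation. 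Since the Frobenius inner product on $\hermitian_0(N)$ agrees, up to a positive constant, with the restriction to $\mathfrak{p}$ of an $\Ad$-invariant inner product on $\mathfrak{su}(2N)$ (a positive multiple of the negative of the Killing form), and since being isoparametric is unaffected by rescaling the ambient metric, the Riemannian geometry in the statement is exactly the one to which Palais--Terng's results apply. Granting this, the proposition is the specialization to our $s$-representation of the general facts (see \cite[Ch.~6]{Palais:1988ks}) that every principal orbit of an $s$-representation is an isoparametric submanifold of the Euclidean space $\mathfrak{p}$, that the maximal abelian subspace $\mathfrak{a}$ is a section meeting every orbit orthogonally, and that the non-principal orbits are the focal submanifolds of the principal ones and hence parallel to them.

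It then remains only to translate ``generic'' into ``principal orbit.'' Here $\mathfrak{a}$, the traceless real diagonal matrices, is a maximal abelian subspace of $\hermitian_0(N)$, and its Weyl group (identified already in the proof of \Cref{thm:quaternionic Schur-Horn}) is $S_N$ acting by permuting diagonal entries. An element $\Lambda = \diag(\lambda_1,\dots,\lambda_N)\in\mathfrak{a}$ is fixed by no nontrivial element of $S_N$ exactly when the $\lambda_i$ are pairwise distinct, i.e.\ exactly when $\Lambda$ is generic, and for such $\Lambda$ one has $Z_{\Sp(N)}(\Lambda) = Z_{\Sp(N)}(\mathfrak{a})$, the principal isotropy subgroup. (A direct check confirms this: a symplectic matrix commuting with a real diagonal matrix with distinct entries must itself be diagonal with unit-quaternion entries, so $Z_{\Sp(N)}(\Lambda)$ is the diagonal subgroup $\Sp(1)^N$, independently of which generic $\Lambda$ is chosen.) Hence a generic $\Lambda$ gives a principal orbit, which is isoparametric; and a non-generic $\Lambda$ gives a non-principal orbit, which by the cited theory is a focal submanifold of any nearby principal orbit $\mathcal{O}_{\Lambda'}$, hence parallel to the isoparametric submanifold $\mathcal{O}_{\Lambda'}$.

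Finally, for generic $\Lambda$ I would pin down the normal space at $\Lambda$ directly. The tangent space $T_\Lambda\mathcal{O}_\Lambda$ is $\{\xi\Lambda - \Lambda\xi : \xi\in\mathfrak{sp}(N)\}$, viewed inside $\hermitian_0(N)$; using that the real part of the Frobenius inner product is invariant under conjugation and that a bracket of Hermitian matrices is skew-Hermitian (hence lies in $\mathfrak{sp}(N)$, on which that pairing is nondegenerate), one gets that the orthogonal complement of $T_\Lambda\mathcal{O}_\Lambda$ inside $\hermitian_0(N)$ is the centralizer $\{X\in\hermitian_0(N) : X\Lambda = \Lambda X\}$. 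Because the $\lambda_i$ are distinct this forces $X$ to be diagonal, hence real and traceless, so the normal space is exactly $\mathfrak{a}$ (consistent with $\mathfrak{a}$ being a section). The only genuinely substantive input is the appeal to Palais--Terng; the main thing to be careful about is that the dictionary is exact on two points --- that ``generic'' is the same as ``regular,'' which the Weyl-group and stabilizer computation settles, and that the Frobenius metric is the correct ambient metric, which the rescaling remark settles --- while the remaining verifications are routine linear algebra over $\quat$.
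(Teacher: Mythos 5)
Your proposal is correct and follows essentially the same route as the paper, which simply cites the Palais--Terng theory of $s$-representation orbits (Example~6.5.6 of~\cite{Palais:1988ks}) for this proposition. Your added verifications --- that generic $\Lambda$ means regular (principal orbit, via the stabilizer computation $Z_{\Sp(N)}(\Lambda)=Z_{\Sp(N)}(\mathfrak{a})=\Sp(1)^N$), that the Frobenius metric is the invariant metric up to scale, and the direct computation identifying the normal space with the centralizer $\mathfrak{a}$ --- are accurate and just make explicit what the paper delegates to the citation.
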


To explain the terminology in the previous sentence, if $M \subset \R^n$ is isoparametric and $\xi$ is a parallel section of the normal bundle $\nu(M)$, then $M_\xi := \{p + \xi(p) : p \in M\}$ is a \emph{parallel submanifold} to $M$, and these parallel submanifolds give a singular foliation of the ambient space. In the special case of isotropy orbits, the orbit foliation and the parallel foliation coincide.

Isoparametric submanifolds and their parallels have some of the nice features of symplectic manifolds admitting Hamiltonian torus actions without necessarily being symplectic. For example, the following fundamental result in symplectic geometry has an analog in the isoparametric setting.

\begin{thm}[{Atiyah~\cite{Atiyah:1982ih} and Guillemin--Sternberg~\cite{Guillemin:1982gx}}]\label{thm:AGS}
	Let $(M,\omega)$ be a compact symplectic manifold admitting a Hamiltonian action of a torus $T\simeq \operatorname{U}(1)^n$. Let $\mu:M \to \R^n$ be the associated momentum map. 
	\begin{itemize}
		\item For any $v \in \R^n$, $\mu^{-1}(v)$ is either empty or connected.
		\item $\mu(M)$ is the convex hull of the images of the fixed points of the $T$-action.
	\end{itemize}
\end{thm}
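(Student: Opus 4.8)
\subsection*{Proof sketch}

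The plan is to derive both assertions by Morse theory applied to linear combinations of the components of the momentum map, following Atiyah~\cite{Atiyah:1982ih} (the argument of Guillemin--Sternberg~\cite{Guillemin:1982gx} is an alternative). The basic observation is that for any $\xi \in \R^n$ the function $f_\xi := \langle \xi, \mu \rangle \colon M \to \R$ is a momentum map for the one-parameter subgroup $t \mapsto \exp(t\xi)$ of $T$, and --- after putting the $T$-action into equivariant Darboux normal form near a point of the fixed set --- $f_\xi$ is a Morse--Bott function whose critical set is the fixed-point set of the closure of that subgroup. Near such a fixed point the symplectic normal bundle decomposes into complex line bundles on which the subgroup acts with nonzero weights, so the negative and positive normal bundles of every critical submanifold have even rank: all Morse indices and coindices of $f_\xi$ are even.

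With that in hand I would first prove the connectedness statement. The key lemma is that a Morse--Bott function on a compact connected manifold whose critical submanifolds all have index and coindex different from $1$ has connected sublevel sets and connected level sets, including over critical values: as the level crosses a critical value the sublevel set is modified by a handle bundle of even rank, which (using connectedness of $M$ to rule out a second local minimum) keeps sublevel sets connected, and the level sets interpolate. Applying this to $f_\xi$ with $\xi$ having rationally independent entries --- so that the critical set of $f_\xi$ is exactly $M^T$ --- shows $f_\xi$ has connected fibers. To upgrade this to connectedness of $\mu^{-1}(v)$ for arbitrary $v \in \R^n$ I would induct on $n$: writing $\bar\mu \colon M \to \R^{n-1}$ for the momentum map of a codimension-one subtorus and $\mu_n$ for the remaining component, $\bar\mu^{-1}(\bar v)$ is connected by the inductive hypothesis, and one reruns the even-index Morse argument for $\mu_n$ restricted to it.

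For convexity I would again induct on $n$. When $n = 1$, $\mu(M) = [\min\mu, \max\mu]$, and the extrema, being critical points of $\mu$, are fixed points, so $\mu(M) = \conv(\mu(M^T))$. For the inductive step, let $\pi \colon \R^n \to \R^{n-1}$ be projection onto the first $n-1$ coordinates and $\bar\mu = \pi \circ \mu$, the momentum map of a codimension-one subtorus; by induction $\bar\mu(M)$ is a convex polytope, and by the connectedness result each fiber $\bar\mu^{-1}(w)$ is connected, so $\mu_n(\bar\mu^{-1}(w))$ is a closed interval $[a(w), b(w)]$ and $\mu(M) = \{(w,s) : w \in \bar\mu(M),\ a(w) \leq s \leq b(w)\}$. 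The local normal form shows that near a fixed point the roof function $b$ is locally concave and the floor function $a$ locally convex; a function locally concave on a convex set is concave, so this region is convex, and its extreme points necessarily lie over fixed points, giving $\mu(M) = \conv(\mu(M^T))$. A slicker route to the inclusion $\mu(M) \subseteq \conv(\mu(M^T))$ is the separating-hyperplane argument: a point of $\mu(M)$ lying outside the polytope can be strictly separated by a rational covector $\xi$, but then $\max f_\xi$ would have to be attained off $M^T$, contradicting that the extrema of $f_\xi$ lie in its critical set $M^T$.

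I expect the main obstacle to be the bookkeeping in the connectedness induction: the intermediate fibers $\bar\mu^{-1}(w)$ need not be smooth submanifolds, so one must check that the even-rank-handle argument still applies to these possibly singular invariant sets --- it does, because the local model near a fixed point is unaffected --- or else replace that step by symplectic reduction, at the cost of orbifold subtleties. The other delicate point is establishing the local concavity of the roof function $b$, which again rests entirely on having a good equivariant normal form for the action near the fixed-point set.
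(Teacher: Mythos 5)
This theorem is not proved in the paper at all: \Cref{thm:AGS} is quoted as a classical result of Atiyah and Guillemin--Sternberg and serves only as motivation for the isoparametric analogues (Terng's convexity theorem and Mare's connectedness theorem) that the paper actually uses, so there is no in-paper argument to compare yours against. Your sketch reproduces Atiyah's original Morse-theoretic proof --- the even-index Morse--Bott lemma for $f_\xi=\langle\xi,\mu\rangle$, induction on the dimension of the torus for fiber connectedness, and fixed points plus supporting functionals for the description of the vertices --- and that skeleton is the correct, standard route.

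Two steps are genuinely incomplete as you state them, however. First, in the connectedness induction you apply the Morse argument to $\mu_n$ restricted to $\bar\mu^{-1}(\bar v)$ and dismiss the smoothness issue because ``the local model near a fixed point is unaffected''; but $\bar\mu^{-1}(\bar v)$ is singular wherever $\bar v$ is a singular value of $\bar\mu$, i.e.\ at points with nontrivial isotropy for some subtorus, not merely at $T$-fixed points, and dealing with this is precisely the content of the inductive lemma in Atiyah's paper (one reduces to regular values and passes to limits, or invokes the local normal form of the momentum map); it cannot be waved away. Second, the convexity induction via concavity of the roof function $b$ is essentially circular: concavity of $b$ over $\bar\mu(M)$ is equivalent to the convexity being proved, you only assert local concavity near fixed points (the maximum of $\mu_n$ on a fiber $\bar\mu^{-1}(w)$ need not be attained at a $T$-fixed point), and ``interval fibers over a convex base'' does not by itself imply convexity. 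The standard repair is to apply the already-established connectedness statement for $(n-1)$-component momentum maps to every integral affine line: for an integral surjection $A\colon\R^n\to\R^{n-1}$ and $b\in\R^{n-1}$, the set $\mu(M)\cap A^{-1}(b)=\mu\bigl((A\circ\mu)^{-1}(b)\bigr)$ is a connected subset of a line, hence an interval, and a compact set meeting every rational line in an interval is convex after a limiting argument. Your separating-hyperplane argument then correctly yields $\mu(M)\subseteq\conv(\mu(M^T))$ (rationality of $\xi$ is not needed: the maximum locus of $f_\xi$ is a compact $T$-invariant symplectic submanifold and therefore contains $T$-fixed points), and convexity of $\mu(M)$ gives the reverse inclusion.
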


\begin{example}
	Let $K$ be a compact Lie group of rank $n$ with Lie algebra $\mathfrak{K}$. Let $\mathcal{O} \subset \mathfrak{K}^\ast$ be an orbit of the coadjoint action of $K$ on $\mathfrak{K}^\ast$. This action of $K$ on $\mathcal{O}$ is Hamiltonian with momentum map being the inclusion $\mathcal{O} \hookrightarrow \mathfrak{K}^\ast$~\cite[Example~5.3.11]{mcduff2017introduction}. 
	
	If $T \subset K$ is a maximal torus, then $T \simeq U(1)^n$ and the coadjoint action of $T$ on $\mathcal{O}$ is also Hamiltonian, with moment map given by the restriction of the projection $\mathfrak{K}^\ast \to \mathfrak{T}^\ast \simeq \R^n$ induced by the inclusion $T \hookrightarrow K$~\cite[Proposition~II.1.10]{Audin:2004bh}. Then \Cref{thm:AGS} implies that the image of this map is convex and its non-empty level sets are connected.
	
	In this case, convexity actually follows from \Cref{thm:Kostant}: let $G = K_\C$ be the complexification of $K$, with Lie algebra $\mathfrak{G} = \mathfrak{K} \oplus \I \mathfrak{K}$. This is a Cartan decomposition and the dual Lie algebra $\mathfrak{K}^\ast$ can be identified with the complementary subspace $\I \mathfrak{K}$ so that the coadjoint action of $K$ on $\mathfrak{K}^\ast$ corresponds to the adjoint action of $K$ on $\I \mathfrak{K}$ and $\mathfrak{T}^\ast \subset \mathfrak{K}^\ast$ corresponds to a maximal abelian subalgebra $\mathfrak{A} \subset \I \mathfrak{K}$. Hence, the projection map that \Cref{thm:Kostant} says has convex image is just the momentum map of the torus action. Of course, Kostant's convexity theorem was a major inspiration for Atiyah and Guillemin--Sternberg, whose result can be interpreted as generalizing this case of Kostant's theorem to arbitrary Hamiltonian torus actions. 
	
	On the other hand, if $\mathcal{O}$ is a principal orbit, then $\mathcal{O}$ is an isoparametric submanifold of $\mathfrak{K}^\ast$ and the normal space at a point can be identified with $\mathfrak{A} \simeq \mathfrak{T}^\ast$. This gives yet another interpretation of the momentum map as the orthogonal projection onto the normal space at a point. Terng showed that, under this interpretation, the convexity part of \Cref{thm:AGS} generalizes to arbitrary isoparametric submanifolds:
\end{example}

\begin{thm}[{Terng~\cite{Terng:1986hg}}]\label{thm:Terng convexity}
	Let $M_\xi \subset \R^{n}$ be parallel to an isoparametric submanifold $M$. Let $p \in M$ so that $p + \xi(p) \in M_\xi$, let $\nu_p(M)$ be the normal space to $M$ at $p$, and let $P:M_\xi \to \nu_p(M)$ be orthogonal projection. Then $P(M_\xi)$ is a convex polytope.
\end{thm}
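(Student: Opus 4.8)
The plan is to identify $P(M_\xi)$ explicitly as the convex hull of a finite Coxeter orbit, then prove the two resulting inclusions separately: one is immediate from the ``section'' property of isoparametric foliations, and the other is the isoparametric analog of Atiyah's Morse-theoretic proof of the convexity half of \Cref{thm:AGS}. Throughout I use that $M_\xi$ is compact (in the application $M_\xi$ is an orbit of the compact group $\Sp(m)$; in general this is the ``Coxeter'' case, where the Weyl group $W$ of $M$ is finite) together with the standard structure theory of isoparametric submanifolds: the normal bundle of $M$ is flat with trivial holonomy, there are finitely many curvature normals which at the base point $p$ cut out affine hyperplanes in the affine normal plane $p+\nu_p(M)$ whose reflections generate $W$, and the parallel submanifolds $\{M_\eta\}$ foliate $\R^n$ in such a way that $p+\nu_p(M)$ is a \emph{section}: it meets every leaf orthogonally and meets $M_\xi$ in exactly one $W$-orbit, namely $W\cdot q$ where $q := p+\xi(p)$.

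First I would dispose of the inclusion $W\cdot q\subseteq P(M_\xi)$. The orthogonal projection $P$ onto $p+\nu_p(M)$ fixes every point of that affine plane, so it restricts to the identity on $(p+\nu_p(M))\cap M_\xi = W\cdot q$; hence $W\cdot q\subseteq P(M_\xi)$. Granting the reverse inclusion proved below, together with the elementary fact that a compact subset of Euclidean space which equals the intersection of all its supporting half-spaces is convex, we conclude $P(M_\xi) = \conv(W\cdot q)$, a convex polytope because $W$ is finite.

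The real content is the inclusion $P(M_\xi)\subseteq\conv(W\cdot q)$, equivalently the claim that for every $a\in\nu_p(M)$ one has $\max_{y\in M_\xi}\langle y,a\rangle = \max_{w\in W\cdot q}\langle w,a\rangle$. Since $a$ is parallel to the linear normal space, $\langle P(y),a\rangle=\langle y,a\rangle$, so this is a statement about the height function $h_a=\langle \cdot, a\rangle$ on $M_\xi$. The crucial structural input --- the isoparametric stand-in for the fact that momentum-map components are Morse--Bott with connected critical manifolds --- is that $h_a$ is a Morse--Bott function on $M_\xi$ whose critical submanifolds are themselves parallel submanifolds (images of curvature spheres), so in particular $h_a$ attains its maximum along a \emph{connected} critical submanifold $C$. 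Along $C$ the gradient of $h_a$ vanishes, i.e.\ $a$ is normal to $M_\xi$; transporting $a$ back to $p$ along a path in $M$ using the flat, holonomy-free normal connection, and using connectedness of $C$, shows that $P(C)$ is a single point, which the section description and Coxeter chamber structure place in $W\cdot q$. Hence $\max_{M_\xi}h_a=\max_{W\cdot q}h_a$; intersecting the half-spaces so obtained over all $a\in\nu_p(M)$ gives $P(M_\xi)\subseteq\conv(W\cdot q)$.

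I expect the main obstacle to be exactly this structural claim: that every height function on a parallel of an isoparametric submanifold is Morse--Bott with \emph{connected} maximal critical set, and that this set collapses under $P$ to a single point of $W\cdot q$. Proving it from scratch requires the local geometry of isoparametric submanifolds --- decomposing $T_yM$ into curvature distributions $E_i$, recognizing the leaves of $E_i$ as round curvature spheres on which $h_a$ restricts to an honest height function of the expected index, and globalizing via the flat normal connection --- or else quoting the Palais--Terng structure theory (Coxeter group, chambers, slice theorem) as a black box. With that in hand the remainder is a faithful transcription of Atiyah's argument for \Cref{thm:AGS}, with orthogonal projection onto the normal space in the role of the momentum map, exactly as the example preceding the statement anticipates.
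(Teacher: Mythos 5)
The paper does not prove this statement at all: it is quoted as a known result, with a citation to Terng, and is then used as a black box (just as \Cref{thm:Mare} is). So the only meaningful comparison is with Terng's original argument, which your sketch does follow in spirit: identify $P(M_\xi)$ with $\conv(W\cdot q)$, get one inclusion from the section property, and get the other from Morse theory of height functions, in analogy with the convexity half of \Cref{thm:AGS}.

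There is, however, a genuine gap at exactly the step you flag as the crux, and as stated that step is false. For a non-generic direction $a$ the top critical manifold $C$ of $h_a$ on $M_\xi$ need not project to a single point of $W\cdot q$: take $M_\xi$ to be a generic adjoint orbit of $\SU(3)$ (an isoparametric orbit of the isotropy/adjoint representation) and $a$ a subregular element of the Cartan subalgebra $\mathfrak{A}=\nu_p(M)$; then the maximum set of $h_a$ is a $2$-sphere (an orbit of the centralizer of $a$), and its orthogonal projection to $\mathfrak{A}$ is an entire edge of the hexagonal polytope, not a point. Moreover, connectedness of the top critical level is not a formal consequence of $h_a$ being Morse--Bott; in Atiyah's proof of \Cref{thm:AGS} connectedness of critical and level sets is itself the hard inductive step, and in the isoparametric setting the analogous connectedness statement is precisely \Cref{thm:Mare}, which needs the multiplicity hypothesis --- so you cannot invoke it for free inside a proof of \Cref{thm:Terng convexity}. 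The standard repair is to run your argument only for nondegenerate $a$: tautness of isoparametric submanifolds and their parallels (every nondegenerate height function is perfect over $\mathbb{Z}_2$) gives a \emph{unique} maximum point $y$, the structure theory (flat normal bundle with trivial holonomy, Coxeter group acting on the section $p+\nu_p(M)$) shows the critical set of such an $h_a$ is exactly the finite set $M_\xi\cap(p+\nu_p(M))=W\cdot q$, hence $\max_{M_\xi}h_a=\max_{W\cdot q}h_a$ for a dense set of directions; since both sides are support functions of compact sets, continuity in $a$ extends the inequality to all $a$, yielding $P(M_\xi)\subseteq\conv(W\cdot q)$. With that modification (and with the structure-theoretic lemma that critical points of nondegenerate height functions lie in the section proved or properly cited), your outline becomes a correct reconstruction of Terng's proof; without it, the central claim does not hold as written.
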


Given this, the projection map $P$ is like a momentum map for isoparametric submanifolds and their parallels, even though these submanifolds need not be symplectic. It is then reasonable to ask whether, as in the first part of \Cref{thm:AGS}, the level sets of $P$ are connected. They needn't be in general, but Mare gave a sufficient condition for all non-empty level sets of $P$ to be connected:

\begin{thm}[{Mare~\cite[Theorem~1.2 and Remark~1.3(a)]{Mare:2005eo}}]\label{thm:Mare}
	Let $M \subset \R^n$ be an isoparametric submanifold with all multiplicities $\geq 2$ and let $M_\xi$ be parallel to $M$. If $p \in M$ and $b \in \nu_p(M)$ is in the image of the projection $P: M_\xi \to \nu_p(M)$, then $P^{-1}(b)$ is connected.
\end{thm}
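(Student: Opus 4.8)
The plan is to prove this by imitating Atiyah's argument for the connectedness half of \Cref{thm:AGS}, with the Morse theory of height functions on $M$ and its parallels playing the role that the Morse theory of momentum-map components plays in the symplectic case. Identify $\nu_p(M)$ with its direction space $\R^k$, where $k$ is the codimension of $M$ (so $k = \dim\nu_p(M)$), so that $P\colon M_\xi \to \R^k$ is the restriction to $M_\xi$ of orthogonal projection; for $a \in \R^k$ the function $h_a := \langle a,\, \cdot\, \rangle|_{M_\xi}$ equals $\langle a, P(\cdot)\rangle$, since $x-P(x)$ is orthogonal to the whole normal subspace for every $x$. The first ingredient I would assemble is the classical Morse theory of isoparametric submanifolds (Hsiang--Palais--Terng, Terng): each $h_a$ is a Morse--Bott function on $M_\xi$ whose critical set is a finite union of connected parallel focal submanifolds, and the index and the coindex of each critical manifold is a sum $\sum_{i \in I} m_i$ of multiplicities of the curvature distributions over a subset $I$ determined by the signs of the pairings $\langle n_i, a\rangle$. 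The hypothesis $m_i \geq 2$ for all $i$ then immediately yields the key consequence: no critical manifold of any $h_a$ has index or coindex equal to $1$ (each is $0$ or $\geq 2$).

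Next I would record the purely Morse-theoretic lemma that powers the rank-one case of \Cref{thm:AGS}: a proper, bounded-below Morse--Bott function on a connected manifold whose critical manifolds are connected and have index and coindex $\neq 1$ has a unique local minimum value, a connected minimum set, and connected non-empty level sets. This is the standard handle-attachment picture --- crossing a critical manifold of index $\geq 2$ attaches a bundle of cells of dimension $\geq 2$, which neither disconnects the sublevel set nor its bounding level set, and dually crossing one of coindex $\geq 2$ from above; an index-$0$ critical manifold away from the global minimum would have to merge with another component later, forcing an index-$1$ critical manifold, which is excluded. Applied to $k=1$, where $P$ is affinely a single $h_a$, this already settles the theorem: every non-empty fiber of $P$ is connected.

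For $k>1$ I would run Atiyah's induction on the rank $k$. Split $\R^k = \R^{k-1} \times \R$, write $P = (P', P_k)$, and fix $b = (b', b_k)$ in the image of $P$. The two steps are: (a) show $N' := (P')^{-1}(b')$ is non-empty and connected, by applying the inductive hypothesis --- which requires knowing that $N'$ is again a parallel of an isoparametric submanifold, now of the affine subspace it spans, with all multiplicities still $\geq 2$; (b) observe that $P_k|_{N'}$ is then a Morse--Bott function whose critical manifolds again have index and coindex $\neq 1$ (by the Morse theory of the first paragraph applied to $N'$), so its non-empty level sets, which are precisely the sets $P^{-1}(b', c)$, are connected by the lemma above --- in particular $P^{-1}(b)$ is connected. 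To make this honest one carries the induction with the reinforced statement that at every stage the surviving height functions $h_{(a,0)}$ still have only critical manifolds of index and coindex $\neq 1$, which is exactly what licenses the appeal to the inductive hypothesis; the fact that $b'$ actually lies in the image of $P'$ (and that each partial image remains a convex polytope) is supplied by \Cref{thm:Terng convexity}.

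The main obstacle is step (a): one must verify that the partial fibers $N' = (P')^{-1}(b')$, and the further fibers produced inside them by the recursion, really are open pieces of parallels of isoparametric submanifolds, with curvature data inherited from $M$ so that all multiplicities remain $\geq 2$ and the Morse-theoretic input keeps applying. This is the isoparametric counterpart of the fact --- used tacitly in \Cref{thm:AGS} --- that preimages under partial momentum maps are again symplectic manifolds carrying Hamiltonian torus actions; here it would be supplied by the slice theorem and the structure theory of isoparametric foliations, which describe how $M$ meets the affine subspaces spanned by subsets of normal directions. Granting that structural bookkeeping, the handle lemma of the second paragraph and the Atiyah-style induction of the third combine to give the connectedness of $P^{-1}(b)$.
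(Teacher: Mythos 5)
First, note that the paper does not prove \Cref{thm:Mare} at all: it is imported verbatim from Mare (Theorem~1.2 and Remark~1.3(a) of the cited work) and used as a black box, so there is no internal proof to compare against. Your sketch does reconstruct the general flavor of the arguments in that literature --- Morse--Bott theory of the height functions $h_a$ on a parallel submanifold, critical sets being focal/parallel manifolds with index and coindex given by sums of multiplicities, so that the hypothesis $m_i \geq 2$ rules out index or coindex $1$, followed by an Atiyah-style induction on the codimension $k$. The $k=1$ case as you present it is sound, and the convexity input you attribute to \Cref{thm:Terng convexity} is used correctly.

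The genuine gap is exactly the step you flag and then ask to be granted: the claim that $N' = (P')^{-1}(b')$ is again (an open piece of) a parallel of an isoparametric submanifold with inherited multiplicities, so that the inductive hypothesis applies to it. This is not something the slice theorem or the structure theory of isoparametric foliations delivers: the fibers of partial projections are level sets cutting across the foliation, they are in general singular, and they carry no natural isoparametric structure, so ``all multiplicities still $\geq 2$'' is not even a meaningful statement about them. This is the same difficulty that makes Atiyah's original induction delicate --- there, too, the partial fibers are not symplectic submanifolds with Hamiltonian torus actions; Atiyah instead proves a connectedness lemma for the next momentum-map component restricted to a possibly singular fiber, using local normal forms on the ambient manifold rather than re-running the hypotheses on the fiber. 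An honest proof in the isoparametric setting must do the analogous work (this is where the real content of Mare's argument lies), and since your proposal substitutes ``granting that structural bookkeeping'' for it, the argument is incomplete at its crux rather than merely sketchy.
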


See Mare's paper for a general definition of multiplicities; in the setting of \Cref{thm:Kostant}, where we are considering orbits of the adjoint action of $K \subset G$ on the complementary subspace $\mathfrak{P} \subset \mathfrak{G}$, the multiplicities are the differences in dimension between a principal orbit and subprincipal orbits~\cite{Palais:1988ks,Hsiang:1988ka}. Recall from \Cref{prop:orbits are isoparametric} that the normal space can be identified with the maximal abelian subspace $\mathfrak{A}$.
 
\section{Existence} 
\label{sec:admissibility}

Let $N \geq d \geq 1$ be integers and let $\blam=(\lambda_1, \dots, \lambda_d)$ and $\br = (r_1, \dots , r_N)$ be lists of positive numbers. Our main object of interest is the collection $\frames_{\blam}(\br)$ of frames $F =[f_1 | \dots | f_N] \in \frames \subset \quat^{d \times N}$ so that $FF^\ast$ has spectrum $\blam$ and $\|f_i\|^2 = r_i$ for $i=1, \dots , N$. Our first goal is to answer the question: are there any such frames?

The analogous question for real and complex frames was answered by Casazza and Leon~\cite{Casazza:2010ti}, and the answer in the quaternionic case is essentially the same:

\begin{thm}\label{thm:admissibility}
	Let $N$, $d$, $\blam$, and $\br$ be as above. The space $\frames_{\blam}(\br)$ is non-empty if and only if $\br \in \conv(S_N \cdot \widetilde{\blam})$, the convex hull of the collection of vectors in $\R^N$ given by permuting the entries of $\widetilde{\blam} := (\lambda_1, \dots , \lambda_d,0,\dots,0)$.
\end{thm}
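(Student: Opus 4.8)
\emph{Proof proposal.} The plan is to translate the statement entirely into the language of Gram matrices and then invoke the quaternionic Schur--Horn theorem (\Cref{thm:quaternionic Schur-Horn}). The first step is the observation that a frame $F = [f_1 \mid \dots \mid f_N] \in \frames$ lies in $\frames_{\blam}(\br)$ precisely when its Gram matrix $F^\ast F \in \hermitian(N)$ has spectrum $\widetilde{\blam}$ and has diagonal entries $(r_1, \dots, r_N)$: the spectrum condition is exactly \Cref{prop:spectrum}, and the diagonal condition holds because the $i$-th diagonal entry of $F^\ast F$ is $\langle f_i, f_i \rangle = \|f_i\|^2$. Conversely, any $M \in \hermitian(N)$ with spectrum $\widetilde{\blam} = (\lambda_1, \dots, \lambda_d, 0, \dots, 0)$ is automatically positive semidefinite of rank exactly $d$, since the $\lambda_i$ are strictly positive; hence by \Cref{prop:equivalence classes} we have $M = F^\ast F$ for some $F \in \frames \subset \quat^{d \times N}$, and because $FF^\ast$ and $F^\ast F$ share the same nonzero eigenvalues, such an $F$ necessarily has frame spectrum $\blam$. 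Thus $F$ with $F^\ast F = M$ belongs to $\frames_{\blam}(\br)$ exactly when the diagonal of $M$ equals $\br$.

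Combining the two halves of this dictionary, $\frames_{\blam}(\br)$ is non-empty if and only if there exists a Hermitian $N \times N$ quaternionic matrix with spectrum $\widetilde{\blam}$ and diagonal $(r_1, \dots, r_N)$; equivalently, if and only if $\br$ lies in the image $\Delta\bigl(\hermitian_{\widetilde{\blam}}(N)\bigr)$ of the diagonal map restricted to matrices with spectrum $\widetilde{\blam}$. Applying \Cref{thm:quaternionic Schur-Horn} with $m = N$ and the list $\widetilde{\blam}$ in place of $\blam$ identifies this image with $\conv(S_N \cdot \widetilde{\blam})$, which is the desired conclusion.

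Given the machinery assembled in \Cref{sec:background}, there is essentially no hard step left: the substantive content has been front-loaded into the Gram matrix correspondence and into \Cref{thm:quaternionic Schur-Horn}, which itself rests on Kostant's convexity theorem. The only points that require a little care are bookkeeping: verifying that a Hermitian matrix with spectrum $\widetilde{\blam}$ has rank exactly $d$ and is positive semidefinite, so that \Cref{prop:equivalence classes} genuinely applies, and noting that the reverse implication to \Cref{prop:spectrum} (that $F^\ast F$ having spectrum $\widetilde{\blam}$ forces $FF^\ast$ to have spectrum $\blam$) follows from the equality of nonzero eigenvalues via the quaternionic singular value decomposition. Finally, to pass from \Cref{thm:admissibility} to the explicit inequalities of \Cref{thm:existence} one invokes only the classical fact that, for lists sorted in non-increasing order, $\br \in \conv(S_N \cdot \widetilde{\blam})$ holds if and only if $\widetilde{\blam}$ majorizes $\br$, i.e. $\sum_{i=1}^k r_i \leq \sum_{i=1}^k \lambda_i$ for all $k \leq d$ together with $\sum_{i=1}^N r_i = \sum_{i=1}^d \lambda_i$; this is Rado's description of the permutohedron and is identical to the reduction used in the real and complex cases.
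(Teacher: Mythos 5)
Your proposal is correct and follows essentially the same route as the paper: both reduce non-emptiness of $\frames_{\blam}(\br)$ to the question of whether $\br$ lies in the image of the diagonal map $\Delta$ on $\hermitian_{\widetilde{\blam}}(N)$ via the Gram matrix correspondence (\Cref{prop:spectrum} and \Cref{prop:equivalence classes}), and then apply \Cref{thm:quaternionic Schur-Horn}. The only cosmetic difference is that the paper phrases the dictionary through the quotient identification $\frames_{\blam}(\br)/\Sp(d) \simeq \Delta^{-1}(\br)$, while you state it as a direct existence equivalence, which changes nothing of substance.
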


\begin{proof}
	\Cref{thm:quaternionic Schur-Horn} will be the key to the proof, so the goal is to relate $\frames_{\blam}(\br)$ to a space of the form $\hermitian_{\boldsymbol{\eta}}(m)$ for some choice of $\boldsymbol{\eta}$ and $m$. 

	Recall from \Cref{prop:equivalence classes} that $\Sp(d)$-equivalence classes of frames in $\frames$ are determined by their Gram matrices. Specifically, the space $\frames_{\blam}/\Sp(d)$ of $\Sp(d)$-equivalence classes of frames with frame spectrum $\blam$ is diffeomorphic to the space of all possible Gram matrices of frames in $\frames_{\blam}$.

	Now, if $F \in \frames_{\blam}$, then definitionally $FF^\ast$ has spectrum $\blam$, and hence the Gram matrix $F^\ast F$ has spectrum $\widetilde{\blam}=(\lambda_1, \dots , \lambda_d, 0, \dots , 0) \in \R^N$. Conversely, any matrix in $\hermitian_{\widetilde{\blam}}(N)$ can be realized as the Gram matrix of a frame in $\frames_{\blam}$, so we see that 
	\[
		\frames_{\blam}/\Sp(d) \simeq \hermitian_{\widetilde{\blam}}(N).
	\]

	In turn, if $\Delta:\hermitian_{\widetilde{\blam}}(N) \to \R^N$ is the map which records diagonal entries, then the level set $\Delta^{-1}(\br)$ is the collection of $\Sp(d)$-equivalence classes of frames with frame spectrum $\blam$ and squared frame norms $\br$; that is, 
	\[
		\Delta^{-1}(\br) \simeq \frames_{\blam}(\br)/\Sp(d).
	\]

	This means that $\frames_{\blam}(\br)$ is non-empty if and only if $\br$ is in the image of $\Delta$. But now \Cref{thm:quaternionic Schur-Horn} tells us that the image of $\Delta$ is exactly the convex hull of $S_N \cdot \widetilde{\blam}$, as desired. 
\end{proof}

Since permuting entries doesn't change their sum, $\conv(S_N \cdot \widetilde{\blam})$ lies in the affine hyperplane $\{(x_1, \dots , x_N) : \sum_{i=1}^N x_i = \sum_{i=1}^d \lambda_i\}$. Hence, $\br \in \conv(S_N \cdot \widetilde{\blam})$ only if
\begin{equation}\label{eq:admissibility equality}
	\sum_{i=1}^N r_i = \sum_{i=1}^d \lambda_i.
\end{equation}
This is just a restatement of~\eqref{eq:cyclic trace}, which said the frame operator and the Gram matrix have the same trace.

In practice, the order of the frame vectors is no more than a bookkeeping convenience, so it is no problem to permute the frame vectors. Likewise, we can freely permute the numbers comprising the frame spectrum. In particular, we can get a more straightforward criterion for non-emptiness of $\frames_{\blam}(\br)$ by sorting $\blam$ and $\br$ in non-increasing order. Specifically, if we make the additional assumption that $\lambda_1 \geq \dots \geq \lambda_d > 0$ and $r_1 \geq \dots \geq r_N > 0$, then we see that $\br \in \conv(S_N \cdot \widetilde{\blam})$ if and only if it satisfies~\eqref{eq:admissibility equality} and
\begin{equation}\label{eq:admissibility inequality}
	\sum_{i=1}^k r_i \leq \sum_{i=1}^k \lambda_i \quad \text{for all }k=1, \dots , d.
\end{equation}

We call the (sorted) $\br$ satisfying~\eqref{eq:admissibility equality} and~\eqref{eq:admissibility inequality} \emph{$\blam$-admissible}. Notice that the admissibility criterion is exactly the same as that given by Casazza and Leon in the real and complex cases~\cite{Casazza:2010ti}. Thus, \Cref{thm:admissibility} is equivalent to \Cref{thm:existence}, which we restate in a more compact form:

\begin{existence}
	Let $N$, $d$, $\blam$, and $\br$ be as above, so that $\blam$ and $\br$ are sorted in non-increasing order. Then $\frames_{\blam}(\br)$ is non-empty if and only if $\br$ is $\blam$-admissible.
\end{existence}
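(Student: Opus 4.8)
The plan is to obtain this as an essentially immediate consequence of \Cref{thm:admissibility}, which already establishes that $\frames_{\blam}(\br)$ is non-empty precisely when $\br \in \conv(S_N \cdot \widetilde{\blam})$. The only remaining work is purely combinatorial: to check that, once $\blam$ and $\br$ are sorted in non-increasing order, membership of $\br$ in this permutohedron is equivalent to the two conditions~\eqref{eq:admissibility equality} and~\eqref{eq:admissibility inequality} defining $\blam$-admissibility.

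First I would recall the classical description of the convex hull of a permutation orbit, due to Rado (equivalently, the Hardy--Littlewood--P\'olya majorization theorem): for $x,y \in \R^N$ one has $x \in \conv(S_N \cdot y)$ if and only if $x$ is \emph{majorized} by $y$, meaning $\sum_i x_i = \sum_i y_i$ together with $\sum_{i=1}^k x^\downarrow_i \le \sum_{i=1}^k y^\downarrow_i$ for every $k = 1, \dots, N$, where $\downarrow$ denotes the non-increasing rearrangement. Since both membership in $\conv(S_N \cdot y)$ and the majorization relation are unaffected by permuting the coordinates of $x$, one may assume $x = x^\downarrow$, which is exactly our standing hypothesis that $\br$ is sorted.

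Next I would apply this with $x = \br$ and $y = \widetilde{\blam} = (\lambda_1, \dots, \lambda_d, 0, \dots, 0)$. Because every $\lambda_i > 0$, the vector $\widetilde{\blam}$ is already non-increasing, so $\widetilde{\blam}^\downarrow = \widetilde{\blam}$, with partial sums $\sum_{i=1}^k \lambda_i$ for $k \le d$ and $\sum_{i=1}^d \lambda_i$ for $k > d$. Unwinding the majorization conditions then shows: equality of totals is precisely~\eqref{eq:admissibility equality}; for $k \le d$ the partial-sum inequality is precisely~\eqref{eq:admissibility inequality}; and for $d < k \le N$ the inequality $\sum_{i=1}^k r_i \le \sum_{i=1}^d \lambda_i$ is automatic once~\eqref{eq:admissibility equality} holds, since it merely asserts $\sum_{i=1}^k r_i \le \sum_{i=1}^N r_i$ and the omitted terms $r_{k+1}, \dots, r_N$ are positive. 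Hence $\br \in \conv(S_N \cdot \widetilde{\blam})$ if and only if $\br$ is $\blam$-admissible, and combining this with \Cref{thm:admissibility} completes the argument.

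There is no real obstacle here; the only care needed is the bookkeeping around the appended zeros of $\widetilde{\blam}$ and the observation that the high-index ($k > d$) majorization inequalities are redundant. If a self-contained treatment were preferred over citing Rado's theorem, the ``only if'' direction follows because $x \mapsto \sum_{i=1}^k x^\downarrow_i = \max_{|S|=k}\sum_{i \in S} x_i$ is convex and constant on the orbit $S_N \cdot \widetilde{\blam}$, so it is bounded by that constant on the convex hull; and the ``if'' direction follows by writing $\br$ as a convex combination of permutations of $\widetilde{\blam}$ using the Birkhoff--von Neumann decomposition of the doubly stochastic matrix relating the two vectors.
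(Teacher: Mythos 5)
Your proposal is correct and follows essentially the same route as the paper: it reduces the statement to \Cref{thm:admissibility} and then identifies membership of the sorted $\br$ in $\conv(S_N \cdot \widetilde{\blam})$ with the majorization conditions~\eqref{eq:admissibility equality} and~\eqref{eq:admissibility inequality}, noting that the inequalities for $k > d$ are redundant. The paper states this permutohedron--majorization equivalence without proof, so your explicit appeal to Rado's theorem (or the convexity/Birkhoff--von Neumann argument) merely fills in that standard step.
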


\section{Connectedness} 
\label{sec:connectedness}

Next, we turn to the question of when $\frames_{\blam}(\br)$ is connected. As in the previous section, we will focus on $\frames_{\blam}(\br)/\Sp(d)$, since the following lemma combined with the fact that $\Sp(d)$ is connected implies the quotient is connected if and only if $\frames_{\blam}(\br)$ is connected.

\begin{lem}\label{lem:connected quotient}
	Let $X$ be a topological space and let $G$ be a connected topological group acting continuously on $X$. If $X/G$ is connected, then $X$ is connected.
\end{lem}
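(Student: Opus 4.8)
The plan is to prove the contrapositive: assuming $X$ admits a separation, produce one on $X/G$. The engine is the elementary fact that each orbit $G\cdot x$ is connected, being the image of the connected group $G$ under the continuous orbit map $g \mapsto g\cdot x$ (continuous as the restriction of the action $G\times X \to X$ to $G \times \{x\}$).

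First I would fix a separation $X = U \cup V$ into nonempty disjoint open sets and observe that, because each orbit $G\cdot x$ is connected, it must lie entirely in $U$ or entirely in $V$. Consequently $U$ and $V$ are each unions of orbits, i.e.\ $G$-invariant (saturated) subsets of $X$. Next I would let $\pi : X \to X/G$ be the quotient map and check that $\pi(U)$ and $\pi(V)$ form a separation of $X/G$: they are nonempty since $U,V$ are; they cover $X/G$ since $U \cup V = X$; they are disjoint because if $\pi(x) = \pi(x')$ with $x \in U$, $x' \in V$, then $x' \in G\cdot x \subseteq U$, contradicting $U \cap V = \emptyset$; and they are open because $U$ is saturated, so $\pi^{-1}(\pi(U)) = U$ is open and hence $\pi(U)$ is open in the quotient topology, and similarly for $V$. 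This contradicts the connectedness of $X/G$, so $X$ must be connected.

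Honestly, there is no substantive obstacle here; the argument is purely formal point-set topology. The single point that deserves a moment's care is the openness of $\pi(U)$ in $X/G$, which is exactly where the hypothesis that $G$ is connected gets used — it is what forces $U$ (and $V$) to be saturated, so that $\pi^{-1}(\pi(U)) = U$ and the definition of the quotient topology applies. Everything else is bookkeeping.
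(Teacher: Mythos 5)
Your proof is correct and is essentially the argument the paper has in mind: the paper simply cites the standard point-set fact that a quotient map with connected fibers (here the orbits, connected because $G$ is connected) and connected target has connected domain, and your saturation argument is exactly the standard proof of that fact, written out in full.
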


This lemma follows from a standard point-set topology argument (see, e.g., \cite[Exercise~5.5]{Manetti:2015ep}) since connectedness of $G$ implies the fibers of the quotient map $X \to X/G$ are connected.

The strategy is to prove connectedness using \Cref{thm:Mare}, which applies to adjoint orbits of a compact group acting on the complementary subspace of the Lie algebra of some larger group $G$. Following the setup in \Cref{sec:isoparametric}, let $G=\SU(2N)$, so that $K=\Sp(N)$ and the complementary subspace $\mathfrak{p} \subset \mathfrak{su}(2N)$ can be identified with $\hermitian_0(N)$, the space of traceless Hermitian quaternionic $N \times N$ matrices, and the standard maximal abelian subspace of $\mathfrak{p}$ corresponds to the subset $\mathfrak{a} \subset \hermitian_0(N)$ of real diagonal $N \times N$ matrices with trace 0.

We saw in the proof of \Cref{thm:admissibility} that $\frames_{\blam}(\br)/\Sp(d) \simeq \hermitian_{\widetilde{\blam}}(N)$, the space of quaternionic Hermitian $N \times N$ matrices with spectrum $\widetilde{\blam} = (\lambda_1, \dots , \lambda_d, 0, \dots 0)$. Since $\lambda_1 + \dots + \lambda_d \neq 0$, this is not quite an orbit of the adjoint action of $\Sp(N)$ on $\hermitian_0(N)$, but it is a simple translation of such an orbit.

Letting  $\sigma:=\lambda_1 + \dots + \lambda_d$ and
\[
	\widetilde{\Lambda} := \diag\left(\lambda_1 ,\dots , \lambda_d, 0, \dots, 0\right) - \frac{\sigma}{N} \Id_N \in \hermitian_0(N),
\]
define $\tau: \hermitian_{\widetilde{\blam}}(N) \to \mathcal{O}_{\widetilde{\Lambda}}$ by $\tau(B) = B - \frac{\sigma}{N} \Id_N$.  

If $\br$ is $\blam$-admissible, then $\Delta^{-1}(\br) \subset \hermitian_{\blam}(N)$ is non-empty, and hence so is 
\[
	\tau(\Delta^{-1}(\br)) = \Delta^{-1}(t(\br)) \subset \mathcal{O}_{\widetilde{\Lambda}},
\]
where $t: \R^N \to \R^N$ is defined by $t(\boldsymbol{x}) := \boldsymbol{x} - \left(\frac{\sigma}{N},\dots , \frac{\sigma}{N}\right)$ and the above equality follows from the fact that the diagram in \Cref{lem:re-center} commutes. But now
\[
	\Delta^{-1}(t(\br)) = P^{-1}(\diag(t(\br))),
\]
where $P: \mathcal{O}_{\widetilde{\Lambda}} \to \mathfrak{a}$ is the orthogonal projection. Since $\mathcal{O}_{\widetilde{\Lambda}}$ is an isotropy orbit\footnote{In fact, it is an example of a quaternionic flag manifold, and much is known about its cohomology~\cite{Mare:2008iz,Hsiang:1988ka,Mare:2006bf}.} and hence also a parallel submanifold to some principal orbit, and since all multiplicities in this setting are equal to 4~\cite[\S 3.3]{Hsiang:1988ka}, connectedness of $\Delta^{-1}(t(\br))$ follows from \Cref{thm:Mare}. Since $\tau$ is a diffeomorphism, we conclude that 
\[
	\Delta^{-1}(\br) \simeq \frames_{\blam}(\br)/\Sp(d)
\]
is connected. 

Finally, applying \Cref{lem:connected quotient} shows that $\frames_{\blam}(\br)$ is connected whenever it is non-empty; i.e., whenever $\br$ is $\blam$-admissible. In turn, since $\frames_{\blam}(\br)$ is a real algebraic set in $\quat^{d \times N} \simeq \R^{4dN}$, it is locally path-connected (in fact, triangulable by \L{}ojasiewicz's triangulation theorem \cite{lojasiewicz1964triangulation}) so that connectivity implies path-connectivity. 

Thus, we have proved \Cref{thm:connected}, which we now restate:

\begin{connectedness}
	For any $\br$ and $\blam$, the space $\frames_{\blam}(\br)$ is path-connected.
\end{connectedness}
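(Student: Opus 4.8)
The plan is to deduce path-connectedness of $\frames_{\blam}(\br)$ from Mare's connectedness theorem (\Cref{thm:Mare}) for level sets of the orthogonal-projection map on an isoparametric submanifold. Since the empty set is trivially path-connected, I may assume $\br$ is $\blam$-admissible, so that $\frames_{\blam}(\br) \neq \emptyset$ by \Cref{thm:admissibility}. The group $\Sp(d)$ acts on $\frames_{\blam}(\br)$ and is connected, so by \Cref{lem:connected quotient} it suffices to show that the quotient $\frames_{\blam}(\br)/\Sp(d)$ is connected.

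First I would recall the chain of diffeomorphisms established in the proof of \Cref{thm:admissibility}, namely $\frames_{\blam}(\br)/\Sp(d) \simeq \Delta^{-1}(\br) \subset \hermitian_{\widetilde{\blam}}(N)$, where $\widetilde{\blam} = (\lambda_1, \dots, \lambda_d, 0, \dots, 0)$ and $\Delta \colon \hermitian_{\widetilde{\blam}}(N) \to \R^N$ records diagonal entries. Since $\sigma := \lambda_1 + \dots + \lambda_d > 0$, the space $\hermitian_{\widetilde{\blam}}(N)$ is not itself an adjoint orbit inside $\hermitian_0(N)$, so I would re-center it using the diffeomorphism $\tau$ of \Cref{lem:re-center}, which carries it onto the genuine orbit $\mathcal{O}_{\widetilde{\Lambda}}$ with $\widetilde{\Lambda} = \diag(\lambda_1, \dots, \lambda_d, 0, \dots, 0) - \frac{\sigma}{N}\Id_N$; the commuting square in that lemma then identifies $\Delta^{-1}(\br)$ with $\Delta^{-1}(t(\br))$ for the corresponding translation $t$ of $\R^N$. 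In the Cartan-decomposition picture of \Cref{sec:isoparametric} with $G = \SU(2N)$ and $K = \Sp(N)$, so that $\mathfrak{p} \simeq \hermitian_0(N)$ and the maximal abelian subspace $\mathfrak{a}$ is the set of traceless real diagonal matrices, the restriction of $\Delta$ to $\mathcal{O}_{\widetilde{\Lambda}}$ is exactly the orthogonal projection $P \colon \mathcal{O}_{\widetilde{\Lambda}} \to \mathfrak{a}$, so $\Delta^{-1}(t(\br)) = P^{-1}(\diag(t(\br)))$.

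Now I would invoke the isoparametric machinery. By \Cref{prop:orbits are isoparametric}, $\mathcal{O}_{\widetilde{\Lambda}}$ is parallel to a principal isoparametric submanifold of $\hermitian_0(N)$, and in this quaternionic ($C$-type) setting all multiplicities equal $4$, hence are $\geq 2$ (see \cite[\S 3.3]{Hsiang:1988ka}). Thus \Cref{thm:Mare} applies and shows that every non-empty level set of $P$---in particular $P^{-1}(\diag(t(\br)))$, which is non-empty because $\br$ is $\blam$-admissible---is connected. Transporting back along $\tau$ shows that $\Delta^{-1}(\br) \simeq \frames_{\blam}(\br)/\Sp(d)$ is connected, and then \Cref{lem:connected quotient} yields that $\frames_{\blam}(\br)$ itself is connected.

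Finally, to upgrade connectedness to path-connectedness, I would observe that $\frames_{\blam}(\br)$ is cut out of $\quat^{d \times N} \simeq \R^{4dN}$ by polynomial equations (the constraints on the spectrum of $FF^\ast$ and on $\|f_i\|^2$), hence is a real algebraic set; such sets are triangulable by \L{}ojasiewicz's theorem \cite{lojasiewicz1964triangulation}, in particular locally path-connected, so connectedness implies path-connectedness. The main obstacle in this program is the bookkeeping required to make \Cref{thm:Mare} literally applicable: verifying that the diagonal-entry map really is the orthogonal projection onto the normal space $\mathfrak{a}$ of the orbit (so that level sets of $\Delta$ are genuinely level sets of $P$), handling the nonzero-trace translation cleanly via \Cref{lem:re-center}, and confirming the multiplicity hypothesis. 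Once these points are in place, the proof is just an assembly of pieces developed in the earlier sections.
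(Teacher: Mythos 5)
Your proposal is correct and follows essentially the same route as the paper: reducing via \Cref{lem:connected quotient} to the quotient $\frames_{\blam}(\br)/\Sp(d) \simeq \Delta^{-1}(\br)$, re-centering with \Cref{lem:re-center} to land on the adjoint orbit $\mathcal{O}_{\widetilde{\Lambda}}$, applying \Cref{thm:Mare} with the multiplicity-$4$ observation, and upgrading to path-connectedness via \L{}ojasiewicz triangulability of the real algebraic set. No gaps to report.
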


If we prefer to focus on the space
\[
	\frames_S(\br) := \{F \in \frames : FF^\ast = S \text{ and } \|f_i\|^2 = r_i \text{ for all } i = 1 , \dots , N\}
\]
of frames with fixed frame operator (rather than fixed frame spectrum) and fixed frame vector norms, connectivity still holds:

\begin{cor}\label{cor:fixed frame operator}
	If $S \in \hermitian(N)$ is positive definite with spectrum $\blam$ and $\br \in \R_+^N$, the space $\frames_S(\br)$ is (i) non-empty if and only if $\br$ is $\blam$-admissible and (ii) path-connected.
\end{cor}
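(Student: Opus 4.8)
The plan is to deduce \Cref{cor:fixed frame operator} from \Cref{thm:existence,thm:connected} by relating the space $\frames_S(\br)$ of frames with fixed frame operator $S$ to the space $\frames_{\blam}(\br)$ of frames with fixed frame spectrum $\blam$, where $\blam$ is the spectrum of $S$. Note that $\frames_S(\br) \subset \frames_{\blam}(\br)$, and that the link between them is the $\Sp(d)$-equivariant map
\[
	\pi \colon \frames_{\blam}(\br) \longrightarrow \mathcal{O}_S := \{U S U^\ast : U \in \Sp(d)\}, \qquad \pi(F) = FF^\ast,
\]
where $\Sp(d)$ acts on the domain by left multiplication and on the orbit $\mathcal{O}_S$ by conjugation. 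Since $\blam$ has positive entries, $\mathcal{O}_S$ is precisely the set of positive-definite elements of $\hermitian(d)$ with spectrum $\blam$; the quaternionic spectral theorem says $\Sp(d)$ acts transitively on this set, with stabilizer at $S$ the centralizer $Z := Z_{\Sp(d)}(S)$, so $\mathcal{O}_S \cong \Sp(d)/Z$. Because left multiplication by a symplectic matrix preserves each norm $\|f_i\|^2$, we get $\pi^{-1}(S) = \frames_S(\br)$ and, more generally, $\frames_{\blam}(\br) = \Sp(d)\cdot\frames_S(\br)$. Part (i) is then immediate: $\frames_S(\br)$ is nonempty if and only if $\frames_{\blam}(\br)$ is, which by \Cref{thm:admissibility} holds if and only if $\br$ is $\blam$-admissible.

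For part (ii), the first step is to observe that $\pi$ is a locally trivial fiber bundle with fiber $\frames_S(\br)$: local smooth sections of the principal bundle $\Sp(d) \to \Sp(d)/Z \cong \mathcal{O}_S$ pull back along $\pi$ to local trivializations, via $(b,F) \mapsto s(b)F$. The second step is that the base $\mathcal{O}_S$ is simply connected. Indeed, $Z$ is block-diagonal and splits as a product $\Sp(m_1)\times\dots\times\Sp(m_k)$, one factor for each distinct eigenvalue of $S$ with its multiplicity $m_j$, hence is connected; and $\Sp(d)$ is simply connected, so the homotopy exact sequence of $Z \to \Sp(d) \to \mathcal{O}_S$ yields $\pi_1(\mathcal{O}_S) = 0$. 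Feeding this and \Cref{thm:connected} into the segment
\[
	\pi_1(\mathcal{O}_S) \longrightarrow \pi_0(\frames_S(\br)) \longrightarrow \pi_0(\frames_{\blam}(\br))
\]
of the homotopy exact sequence of the bundle $\frames_S(\br) \hookrightarrow \frames_{\blam}(\br) \xrightarrow{\pi} \mathcal{O}_S$ forces $\pi_0(\frames_S(\br))$ to be a single point (the case $\frames_{\blam}(\br) = \emptyset$ being trivial), i.e.\ $\frames_S(\br)$ is connected. Finally, as in \Cref{sec:connectedness}, $\frames_S(\br)$ is cut out of $\quat^{d\times N} \cong \R^{4dN}$ by polynomial equations and is therefore locally path-connected, so connectedness promotes to path-connectedness.

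I expect the step requiring the most care to be the structural verification that $\pi$ really is a locally trivial fiber bundle (so that its homotopy exact sequence is available) together with the computation of $Z$; everything after that is formal. One can sidestep the appeal to $\pi_1(\Sp(d)) = 0$ with a direct path-correction argument instead: given $F_0, F_1 \in \frames_S(\br)$ and a path $\gamma$ joining them inside $\frames_{\blam}(\br)$ (supplied by \Cref{thm:connected}), the bundle over $[0,1]$ whose fiber at $t$ is the coset $\{U \in \Sp(d) : U\gamma(t)\gamma(t)^\ast U^\ast = S\}$ of $Z$ is trivial with path-connected fibers, so it admits a section $t \mapsto U(t)$ with $U(0) = U(1) = \Id$, and then $t \mapsto U(t)\gamma(t)$ is a path in $\frames_S(\br)$ from $F_0$ to $F_1$.
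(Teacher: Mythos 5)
Your proof is correct, and for the connectedness part it takes a genuinely different route from the paper's. Part (i) is identical in both: realize the frame operator of some $F \in \frames_{\blam}(\br)$ as $VSV^\ast$ with $V \in \Sp(d)$ and pass to $V^\ast F$. For part (ii), the paper argues by direct path-correction: given $F_0, F_1 \in \frames_S(\br)$, it takes a path $\widetilde{\gamma}$ in $\frames_{\blam}(\br)$ supplied by \Cref{thm:connected} and sets $\gamma(t) = U_t^\ast \widetilde{\gamma}(t)$, asserting that $\widetilde{\gamma}(t)\widetilde{\gamma}(t)^\ast = U_t S U_t^\ast$ determines a continuous path $U_t$ --- which is essentially your closing ``sidestep'' paragraph, except that you supply the justification the paper leaves implicit: the admissible $U_t$ form a pulled-back principal bundle over $[0,1]$ whose fibers are cosets of the centralizer $Z \cong \Sp(m_1)\times\dots\times\Sp(m_k)$, which is trivial with path-connected fibers, so one can choose $U_t$ continuously with $U_0 = U_1 = \Id$ (without that endpoint normalization the corrected path need not actually end at $F_0$ and $F_1$). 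Your main argument is instead structural: you exhibit $F \mapsto FF^\ast$ as a locally trivial bundle $\frames_S(\br) \hookrightarrow \frames_{\blam}(\br) \to \mathcal{O}_S$ via local sections of $\Sp(d) \to \Sp(d)/Z$, deduce $\pi_1(\mathcal{O}_S) = 0$ from $\pi_1(\Sp(d)) = 0$ and connectedness of $Z$, and then the homotopy exact sequence together with \Cref{thm:connected} forces $\pi_0(\frames_S(\br))$ to be a point; this is all sound, and since $\pi_0$ already records path components, your final appeal to local path-connectedness of a real algebraic set is superfluous on this route. What each buys: the paper's correction argument is shorter and needs only the bundle $\Sp(d) \to \mathcal{O}_S$ and path-connectedness of $Z$, with no fibration exact sequence and no use of simple connectivity of $\Sp(d)$; your bundle picture costs more setup but makes the global structure of $\frames_{\blam}(\br)$ over the orbit $\mathcal{O}_S$ explicit, isolates exactly where positivity of $\blam$ and connectedness of the centralizer enter, and would also transmit higher-connectivity information between $\frames_S(\br)$, $\frames_{\blam}(\br)$, and $\mathcal{O}_S$. (One cosmetic point: the frame operator is $d \times d$, so $S$ should lie in $\hermitian(d)$; the $\hermitian(N)$ in the statement is a slip that you have implicitly corrected.)
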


\begin{proof}
	If $\frames_S(\br) \subset \frames_{\blam}(\br)$ is non-empty, then certainly $\br$ is $\blam$-admissible by \Cref{thm:admissibility}. Conversely, if $\br$ is $\blam$-admissible, then $\frames_{\blam}(\br)$ is non-empty, so that there exists $F \in \frames_{\blam}(\br)$, and $FF^\ast = V S V^\ast$ for some $V \in \Sp(d)$. But then $V^\ast F \in \frames_S(\br)$, so we see that $\frames_S(\br)$ is non-empty as well.
	
	Since the empty set is trivially path-connected, the only thing to prove is that $\frames_S(\br)$ is path-connected whenever it is non-empty. In this case, we know from \Cref{thm:connected} that $\frames_{\blam}(\br)$ is path-connected. So if $F_0, F_1 \in \frames_S(\br) \subset \frames_{\blam}(\br)$, then there exists a path $\widetilde{\gamma}: [0,1] \to \frames_{\blam}(\br)$ so that $\widetilde{\gamma}(0) = F_0$ and $\widetilde{\gamma}(1) = F_1$.
	
	Of course, the path $\widetilde{\gamma}$ need not stay in $\frames_S(\br)$, but we can easily fix it up to do so. Indeed, the frame operator
	\[
		\widetilde{\gamma}(t)\widetilde{\gamma}(t)^\ast = U_t S U_t^\ast
	\]
	determines a continuous path $\{U_t: t \in [0,1]\} \subset \Sp(d)$, and hence
	\[
		\gamma(t) := U_t^\ast \widetilde{\gamma}(t)
	\]
	is a path in $\frames_S(\br)$ connecting $F_0$ and $F_1$.
\end{proof}

\section{Discussion}
\Cref{cor:fixed frame operator} is the exact analog of the main theorem in our previous paper~\cite{NeedhamSGC} on complex frames. While our argument in the complex case was based on symplectic geometry, the strategy employed here for proving \Cref{thm:connected} and \Cref{cor:fixed frame operator} can be adapted to the complex setting, so we view this approach as somewhat more general.

Indeed, much of the setup goes through in the real case as well: orthogonal equivalence classes of real frames with fixed frame spectrum correspond to adjoint orbits by way of their Gram matrices, and hence are parallel to isoparametric submanifolds of the space of symmetric matrices. Unfortunately, all multiplicities in this case are equal to 1~\cite[\S 3.2]{Hsiang:1988ka}, so \Cref{thm:Mare} does not apply. Indeed, while some of the real frame spaces $\F^{\R^d,N}_{\blam}(\br)$ are connected~\cite{Cahill:2017gv}, others are not~\cite{Kapovich:1995wg} (compare with~\cite[Theorem~2.7]{Goyal:2001cd}), and it seems challenging (but interesting!) to characterize which $\blam$ and $\br$ lead to which outcome. Since the language of isoparametric submanifolds provides a common framework for understanding real, complex, and quaternionic frames, this perspective seems ripe for further exploration.

\Cref{thm:existence,thm:connected} have the same statements as the corresponding results in the complex case, which gives some reason to hope that direct translations of other results about complex frames might give true statements about quaternionic frames. For example, it seems likely that there is a quaternionic extension of the eigenstep method~\cite{Cahill:2013jv} which would construct all elements of $\frames_{\blam}(\br)$, and more generally give a constructive proof of \Cref{thm:quaternionic Schur-Horn} (cf.~\cite{Fickus:2013ki}).

\subsection*{Acknowledgments}

We are very grateful to Augustin-Liviu Mare, Emily King, and Colin Roberts for providing inspiration and sharing their knowledge and insight. This work was partially supported by grants from the National Science Foundation (DMS--2107808, Tom Needham; DMS--2107700, Clayton Shonkwiler).

\bibliography{needham_bibliography,shonkwiler-papers}

\end{document}